\title{A distance on curves modulo rigid transformations}
\author{Jaap Eldering\thanks{Corresponding author; email address
    \url{j.eldering@imperial.ac.uk}.}, Joris Vankerschaver}
\affil{\small Department of Mathematics, Imperial College London, \\
  \small London SW7 2AZ, United Kingdom}
\theoremstyle:=definition,remark,plain\do{%
        \expandafter\g@addto@macro\csname th@\theoremstyle\endcsname{%
            \addtolength\thm@preskip\parskip
            }%
        }
\newcommand{\D}{\ensuremath{\mathrm{D}}}
\newcommand{\T}{\ensuremath{\mathrm{T}}}
\newcommand{\R}{\ensuremath{\mathbb{R}}}
\newcommand{\slot}{\,\cdot\,}
\renewcommand{\d}{{\;\textrm{d}}}
\newcommand{\var}{\delta\!}
\newtheorem{thm}{Theorem}[section]
\newtheorem{prop}[thm]{Proposition}
\newtheorem{lemma}[thm]{Lemma}
\theoremstyle{remark}
\newtheorem{remark}[thm]{Remark}
\DeclarePairedDelimiter\abs{\lvert}{\rvert}
\DeclarePairedDelimiter\norm{\lVert}{\rVert}
\DeclarePairedDelimiterX\inner[2]{\langle}{\rangle}{#1 , #2}
\DeclareMathOperator{\Ad}{Ad}
\DeclareMathOperator{\tr}{tr}
\begin{document}
\maketitle

\begin{abstract}
  We propose a geometric method for quantifying the difference between
  parametrized curves in Euclidean space by introducing a distance
  function on the space of parametrized curves up to rigid
  transformations (rotations and translations). Given two curves, the
  distance between them is defined as the infimum of an energy
  functional which, roughly speaking, measures the extent to which the
  jet field of the first curve needs to be rotated to match up with
  the jet field of the second curve. We show that this energy
  functional attains a global minimum on the appropriate function
  space, and we derive a set of first-order ODEs for the minimizer.

  \medskip
  MSC classification: 58E30 (Primary), 49Q10, 53A04 (Secondary).
\end{abstract}

\section{Introduction}

In this paper, we establish a new, geometric method for quantifying
the difference between curves in Euclidean space, based on the amount
of deformation needed to optimally match the jets of two curves.

In a nutshell, our method can be described as follows. Given two
curves $c_1, c_2$ defined on the same interval $I$, and with values in
$\R^n$, we try to rotate the tangent vector field of $c_1$ as well as
possible into that of $c_2$. As a first attempt, we may therefore look
for a family $g(s) \in O(n)$ of orthogonal transformations, so that
\begin{equation} \label{constraint}
  g(s) c_1'(s) = c_2'(s) \quad \text{for all $s \in I$},
\end{equation}
and quantify the distance between $c_1$ and $c_2$ as the ``magnitude''
of $g(s)$ in some appropriate sense. The hard
constraint~\eqref{constraint} cannot generally be satisfied, however,
since $O(n)$ preserves lengths. Therefore, we relax it into a soft
constraint and look instead for a curve $g(s)$ which
satisfies~\eqref{constraint} approximately, while at the same time
trying to minimize its variation. One way of doing so is by
considering a functional of the form
\begin{equation} \label{intro_energy}
  E[g; c_1, c_2] = \int_I \left(
    \norm{g(s) c_1'(s) - c_2'(s)}^2 +
    \norm{g(s)^{-1} g'(s)}^2 \right) \d{s},
\end{equation}
where $s \mapsto g(s)$ are curves in $O(n)$.
The distance between $c_1$ and $c_2$ we then define as
the infimum of $E$ over all possible curves $g(s)$:
\[
  d(c_1, c_2) = \inf_{g} E[g; c_1, c_2].
\]
Theorem~\ref{thm:distance} shows that this notion indeed defines a
distance, on the quotient space $C^k(I, \R^n) / E(n)$ of $k$-times
differentiable curves modulo rotations and translations. That is,
if $c_2$ can be obtained from $c_1$ by a rigid isometry, then the distance
vanishes. Conversely, if the distance is nonzero, then the curves are
not related by a rigid isometry.  More generally, the distance
function $d(c_1, c_2)$ is invariant (in both arguments separately) under
the action of the Euclidean group $E(n)$ on the space of curves.

A similar energy functional was considered in~\cite{HoNoVa2013} and we
employ a similar variational approach to characterize the minimizer
$g(s)$. However, whereas~\cite{HoNoVa2013} considered the action of
the Euclidean group $E(n)$ directly on the curves itself, we use
instead the action of the orthogonal group $O(n)$ on the tangent
vector field and the higher derivatives. The result is a notion of
(dis)similarity between curves which is a true distance function,
i.e.\ which is symmetric, non-negative, and satisfies the triangle
inequality. Furthermore, we prove that the energy functional $E$
attains its minimum in the space of curves $H^1(I,O(n))$, and that a
minimizer $g$ satisfies the variational equations in the strong sense,
so in particular $g \in C^2(I,O(n))$.

\subsection*{Plan of the paper}

In section~\ref{sec:curve_registration} we first define a slight
generalization of~\eqref{intro_energy}, in which not just the tangent
vector field of $c_1, c_2$ but also higher-order derivatives
(i.e.~jets of $c_1, c_2$) are taken into account, and in
section~\ref{sec:variational-eqns} we derive a set of variational
equations which characterize critical points of the energy and provide
some examples in section~\ref{sec:examples}. In
section~\ref{sec:exist-minimizer} we prove existence of
minimizers of $E$, and we show that any such minimizer must
necessarily be a solution of the variational equations. We finish the
paper by giving a probabilistic interpretation in
section~\ref{sec:probability} for the curve matching energy
functional.

\section{Geometric Preliminaries}

\subsection{Spaces of jets and their duals}

We consider a given parametrized curve $c\colon I \to \R^n$
with $I$ a bounded, closed interval
and we denote by $j^{1, k} c$ the $(1, k)$-jet of this
curve.  By definition,
\[
	(j^{1, k}_s c)(\sigma) = \sum_{i = 1}^k \frac{1}{i!} \frac{d^i c}{d s^i}(s) (\sigma - s)^i,
\]
which is simply the $k$-th order Taylor expansion at the point $s \in I$
without the zeroth order term.  For each $s$ we may view $j^{1,k}_s c$
as an $n \times k$-matrix, whose $i$-th column is the $i$-th
derivative of $c$ evaluated at $s$:
\[
	j^{1, k}_s c =
		\begin{pmatrix}
			\vline &          &\vline \\
			c'(s) & \cdots & c^{(k)}(s) \\
			\vline &          &\vline
		\end{pmatrix}.
\]
We let $V$ be the vector space of $n \times k$-matrices, so that
$j^{1, k} c$ can be viewed as a parametrized curve on the interval
$I$ with values in $V$.  For future reference, we identify the
dual $V^\ast$ with $V$ itself by means of the (Frobenius) inner
product
\[
	\inner{A}{B}_V = \tr(A^T B),
\]
for $A, B \in V \cong V^\ast$.  Secondly, we endow $V$ itself with a
weighted inner product, defined as follows. For constants
$\lambda_i > 0$, $i = 1, \ldots, k$, we let
\begin{equation} \label{eq:jet_inner_product}
	\inner{A}{B}_L
  = \sum_{i=1}^k \sum_{j=1}^n \lambda_i A_{ji} B_{ji}
  = \sum_{i=1}^k              \lambda_i \inner{a^{(i)}}{b^{(i)}}
  = \tr(L A^T B),
\end{equation}
where $L$ is the diagonal matrix with entries $L_{ii} = \lambda_i$,
$i = 1, \ldots, k$, while $a^{(i)}$ and $b^{(i)}$, $i = 1, \ldots, k$,
are the columns of $A$ resp. $B$. We denote the norm induced by the
inner product~\eqref{eq:jet_inner_product} by $\norm{\slot}_L$, and
the induced flat operation by $\flat\colon V \to V^\ast$, which is
given by $A \mapsto AL$.

\subsection{The action of the orthogonal group on the space of jets}

The Euclidean group $E(n) := O(n) \ltimes \R^n$ acts point-wise by
rotations and translations on curves in $\R^n$ and induces an action
by its subgroup $O(n)$ on the $(1, k)$-jets of these curves, in the
following way.  Let $c\colon I \to \R^n$ be a curve and
consider an element $(g, x)$ of the Euclidean group $E(n)$.  By
point-wise multiplication, we obtain a transformed curve, $d$, defined
by $d(s) := g c(s) + x$, for all $s \in I$. Note that the
derivatives of $d$ are given by $d^{(i)}(s) = g c^{(i)}(s)$. In other
words, the translational part of the action drops out, and we are left
with an action of $O(n)$ on the jet space $V \cong J^{1, k}(\R^n)$,
given by
\[
	g \cdot A =
		\begin{pmatrix}
			\vline &          &\vline \\
			g a^{(1)} & \ldots & g a^{(k)} \\
			\vline &          &\vline
		\end{pmatrix}
	= g A,
\]
where the operation on the right-hand side is simply matrix
multiplication, and the $a^{(i)}$ are the columns of $A$.

The family of norms $\norm{\slot}_{L}$ which we introduced previously
is natural in the sense that (for each set of constants $\lambda_i$),
the norm $\norm{\slot}_{L}$ is
invariant under the action of $O(n)$.  This is clear from the
expression~\eqref{eq:jet_inner_product}, where the inner product
$\inner{\slot}{\slot}_L$ is expressed as a linear combination of the
Euclidean inner products on each of the $k$ derivatives of the curve,
each of which is individually $O(n)$-invariant.

We recall that the Lie algebra $\mathfrak{o}(n)$ of the orthogonal
group consists of all antisymmetric $n \times n$-matrices $\Omega$,
equipped with the Lie bracket
$[\Omega, \Omega'] = \Omega \Omega' - \Omega' \Omega$.
The infinitesimal action of $\mathfrak{o}(n)$ on $V$
is given again by left matrix multiplication: any $\Omega \in
\mathfrak{o}(n)$ defines a linear transformation on $V$ given by
mapping $A \in V$ to $\Omega A$.  The Lie algebra $\mathfrak{o}(n)$ is
equipped with a positive-definite inner product defined by
\begin{equation} \label{eq:killing_inner_product}
  \inner{\Omega}{\Omega'}_{\mathfrak{o}(n)}
  :=  \tr( \Omega^T \, \Omega' )
   = -\tr( \Omega \, \Omega' )
\end{equation}
and we use this inner product to identify the dual space
$\mathfrak{o}(n)^\ast$ with $\mathfrak{o}(n)$ itself via matrix
transposition.

\subsection{The momentum map}

The action of a Lie group $G$ on a manifold $M$ induces a (cotangent
lift) momentum map $J\colon \T^\ast M \to \mathfrak{g}^\ast$. In our
case, where $G = O(n)$ acts by linear transformations on the vector
space $V$, we find that $J$ is a bilinear map, and denote it by the
\emph{diamond operator},
$\diamond \colon V \times V^\ast \to \mathfrak{g}^\ast$. To ease later
notation, let $\flat\colon V \to V^\ast$ denote the identification of
dual spaces induced by $\inner{\slot}{\slot}_L$.
Then, for each $A,B \in V$, $A \diamond B^\flat$ is an element of
$\mathfrak{o}(n)^\ast$, defined by
\begin{equation} \label{eq:diamond_operator_definition}
	\inner{A \diamond B^\flat}{\Omega}_{\mathfrak{o}(n)^\ast \times \mathfrak{o}(n)}
		= \inner{B^\flat}{\Omega A}_{V^\ast \times V}
\end{equation}
for all $\Omega \in \mathfrak{o}(n)$. Using the
expressions~\eqref{eq:jet_inner_product}
and~\eqref{eq:killing_inner_product} for the inner products, we may
rewrite this definition as
\[
    \tr((A \diamond B^\flat) \Omega)
  = \tr((L B^T)(\Omega A))
  = \tr(A L B^T \Omega),
\]
and as this must hold for all $\Omega \in \mathfrak{o}(n)$, we have
that
\begin{equation} \label{eq:expression_diamond}
	A \diamond B^\flat = (ALB^T)_{\mathrm{antisymm}} =
            \frac{1}{2} ( ALB^T - BLA^T).
\end{equation}

From the previous expression, or by direct inspection, it is easy to
derive the following result.
\begin{lemma} \label{lem:antisymmetry}
  The diamond operator~\eqref{eq:diamond_operator_definition} composed
  with the flat operator is antisymmetric, viz.
  $A \diamond B^\flat = - B \diamond A^\flat$ for all $A, B \in V$.
\end{lemma}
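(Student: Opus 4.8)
The plan is to read the result off directly from the closed-form expression \eqref{eq:expression_diamond}. Since $A \diamond B^\flat = \tfrac{1}{2}(ALB^T - BLA^T)$, interchanging the roles of $A$ and $B$ yields $B \diamond A^\flat = \tfrac{1}{2}(BLA^T - ALB^T)$, which is manifestly the negative of the former. This is essentially a one-line verification with no real obstacle: the only point deserving a moment's attention is that the two summands in \eqref{eq:expression_diamond} simply swap under $A \leftrightarrow B$, so that the whole expression changes sign.

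Alternatively, and more in the spirit of ``direct inspection,'' I would argue straight from the defining relation \eqref{eq:diamond_operator_definition}, without invoking the explicit formula. For an arbitrary $\Omega \in \mathfrak{o}(n)$, linearity of the pairing together with \eqref{eq:diamond_operator_definition} gives $\inner{A \diamond B^\flat + B \diamond A^\flat}{\Omega}_{\mathfrak{o}(n)^\ast \times \mathfrak{o}(n)} = \inner{B^\flat}{\Omega A}_{V^\ast \times V} + \inner{A^\flat}{\Omega B}_{V^\ast \times V}$. Using $B^\flat = BL$, $A^\flat = AL$, and cyclicity of the trace, I would rewrite these as $\inner{B^\flat}{\Omega A}_{V^\ast \times V} = \tr(ALB^T \Omega)$ and $\inner{A^\flat}{\Omega B}_{V^\ast \times V} = \tr(BLA^T \Omega)$, so that the sum becomes $\tr\!\big((ALB^T + BLA^T)\Omega\big)$.

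The key observation is then that $ALB^T + BLA^T$ is symmetric: because $L$ is diagonal one has $(ALB^T)^T = BLA^T$, so the bracketed matrix equals its own transpose. The trace of the product of a symmetric matrix with the antisymmetric matrix $\Omega$ vanishes, hence the pairing is zero for every $\Omega \in \mathfrak{o}(n)$. Since $\inner{\slot}{\slot}_{\mathfrak{o}(n)}$ is nondegenerate, we conclude $A \diamond B^\flat + B \diamond A^\flat = 0$, as claimed. The entire argument is elementary; the only step worth flagging is the symmetry of $ALB^T + BLA^T$, which is precisely the feature annihilated upon pairing against an antisymmetric matrix.
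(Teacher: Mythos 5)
Your proof is correct and follows exactly the route the paper indicates: the sign flip is immediate from the explicit formula~\eqref{eq:expression_diamond}, and your second argument (pairing $ALB^T + BLA^T$, a symmetric matrix, against the antisymmetric $\Omega$) is precisely the ``direct inspection'' the paper alludes to. Nothing is missing.
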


\section{The curve registration functional}
\label{sec:curve_registration}

Given a source and target curve $c_1, c_2\colon I \to \R^n$,
we define an energy functional on the space of curves
$g\colon I \to O(n)$, given by
\begin{equation} \label{eq:energy_functional}
	E[g; c_1, c_2] = \frac{1}{2} \int_I \left(
		  \norm{ g(s) \cdot j^{1, k}_s c_1 - j^{1, k}_s c_2 }_L^2
		+ \norm{ g(s)^{-1} g'(s) }_{\mathfrak{o}(n)}^2
			\right) \d s.
\end{equation}
Here, the norms on the right-hand
side are induced by the inner products~\eqref{eq:jet_inner_product}
and~\eqref{eq:killing_inner_product}, respectively. The first term in
the energy functional measures how well the curve $s \mapsto g(s)$ is
able to rotate the jet field of $c_1$ into that of $c_2$, while the
second term is a measure for how far the curve $s \mapsto g(s)$ is
from being constant.

To simplify the notation somewhat for later, we let
\begin{equation}\label{eq:def-Q-Omega}
Q(s) := g(s) \cdot j^{1, k}_s c_1 - j^{1, k}_s c_2
\quad\text{and}\quad
\Omega(s) := g(s)^{-1} g'(s),
\end{equation}
so that
\[
	E[g; c_1, c_2] = \frac{1}{2} \int_I \left(
		  \norm{ Q(s) }_L^2
		+ \norm{ \Omega(s) }_{\mathfrak{o}(n)}^2
			\right) \d s.
\]

\begin{remark}
  Notice that a curve $g$ must have a square-integrable derivative
  for~\eqref{eq:energy_functional} to be well-defined. In
  section~\ref{sec:exist-minimizer} we shall properly define the space
  $H^1(I,O(n))$ of all such curves. For the moment, we can mostly
  ignore the technicalities associated with it: the space of smooth
  functions $C^\infty(I,O(n))$ is dense in $H^1(I,O(n))$, so
  minimizing over only smooth functions does not affect the distance
  defined below, and the variational equations derived in
  section~\ref{sec:variational-eqns} turn out to always be $C^2$.
\end{remark}

We now define the distance function $d(c_1,c_2)$ as the infimum of
this action over all curves $g$. Remark that it still has to be
checked that this defines a proper distance function on curves modulo
rigid transformations.

\begin{thm}\label{thm:distance}
  The function
  \begin{equation}\label{eq:distance}
    d(c_1,c_2) = \inf_{g \in H^1(I,O(n))} E[g;c_1,c_2]
  \end{equation}
  defines a distance function on the space $C^k(I,\R^n) / E(n)$ of
  curves modulo rigid transformations.
\end{thm}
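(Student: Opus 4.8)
My plan is to verify the defining properties of a metric for $d$ on the quotient $C^k(I,\R^n)/E(n)$: non-negativity, symmetry, the separation property, and the triangle inequality. Non-negativity is immediate, since the integrand of $E$ is a sum of two squared norms, so $d = \inf_g E \ge 0$. Throughout I would work with smooth competitors and invoke the stated density of $C^\infty(I,O(n))$ in $H^1(I,O(n))$, so that restricting to smooth $g$ leaves the infimum unchanged. The two structural facts I would lean on repeatedly are the $O(n)$-invariance of $\norm{\slot}_L$, i.e.\ $\norm{gA}_L = \norm{A}_L$, and the $\Ad$-invariance of $\norm{\slot}_{\mathfrak{o}(n)}$, i.e.\ $\norm{g\Omega g^{-1}}_{\mathfrak{o}(n)} = \norm{\Omega}_{\mathfrak{o}(n)}$; both follow directly from the trace formulas for the inner products in the preliminaries.

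For symmetry I would show $E[g^{-1};c_2,c_1] = E[g;c_1,c_2]$ for every $g$, so that the infima coincide. Writing $\tilde g = g^{-1}$ and denoting the associated quantities by $\tilde Q, \tilde\Omega$, a short computation gives $\tilde Q = g^{-1}(j^{1,k}c_2 - g\cdot j^{1,k}c_1) = -g^{-1}Q$ and $\tilde\Omega = \tilde g^{-1}\tilde g' = -g'g^{-1} = -g\Omega g^{-1}$. The two invariances above then yield $\norm{\tilde Q}_L = \norm{Q}_L$ and $\norm{\tilde\Omega}_{\mathfrak{o}(n)} = \norm{\Omega}_{\mathfrak{o}(n)}$ pointwise, hence $E[g^{-1};c_2,c_1] = E[g;c_1,c_2]$ and $d(c_2,c_1) = d(c_1,c_2)$.

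For the separation property, one direction is easy: if $c_2 = g_0 c_1 + x_0$ for some $(g_0,x_0)\in E(n)$, then $j^{1,k}c_2 = g_0\cdot j^{1,k}c_1$, so the constant curve $g\equiv g_0$ makes both $Q$ and $\Omega$ vanish and $d(c_1,c_2)=0$. For the converse I would invoke the existence result of Section~\ref{sec:exist-minimizer}: when $d(c_1,c_2)=0$ the infimum is attained by some $g^\ast\in H^1(I,O(n))$ with $E[g^\ast]=0$. Since the integrand is a sum of squares, both terms vanish identically; $\Omega^\ast\equiv 0$ forces $(g^\ast)'\equiv 0$, so $g^\ast$ is a constant $g_0\in O(n)$, and $Q^\ast\equiv 0$ gives $g_0 c_1^{(i)}(s) = c_2^{(i)}(s)$ for all $i$ and $s$. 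Integrating the $i=1$ identity yields $c_2 = g_0 c_1 + x_0$ for a constant $x_0$, so the two curves represent the same class in $C^k(I,\R^n)/E(n)$.

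The triangle inequality is the main obstacle. Given near-optimal matchings $g_{12},g_{23}$ (within $\varepsilon$ of the respective infima), the natural competitor for $(c_1,c_3)$ is the pointwise product $g_{13} := g_{23}\,g_{12}\in H^1(I,O(n))$, and it suffices to bound $d(c_1,c_3)\le E[g_{13}]$ and let $\varepsilon\to 0$. Using $g(s)\in O(n)$ together with the two invariances, one obtains the cocycle identities $Q_{13} = g_{23}Q_{12} + Q_{23}$ and $\Omega_{13} = \Ad_{g_{12}^{-1}}\Omega_{23} + \Omega_{12}$, and hence the pointwise estimates $\norm{Q_{13}(s)}_L \le \norm{Q_{12}(s)}_L + \norm{Q_{23}(s)}_L$ and $\norm{\Omega_{13}(s)}_{\mathfrak{o}(n)} \le \norm{\Omega_{12}(s)}_{\mathfrak{o}(n)} + \norm{\Omega_{23}(s)}_{\mathfrak{o}(n)}$. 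The crux is to promote these first-order (Minkowski-type) bounds to the required estimate $E[g_{13}] \le E[g_{12}] + E[g_{23}]$ on the \emph{quadratic} energy. Squaring and integrating produces the cross term $\int_I \bigl(\norm{Q_{12}}_L\norm{Q_{23}}_L + \norm{\Omega_{12}}_{\mathfrak{o}(n)}\norm{\Omega_{23}}_{\mathfrak{o}(n)}\bigr)\d s$, and controlling it is the delicate point: introducing the combined pointwise norm $N_{ij}(s) = (\norm{Q_{ij}}_L^2 + \norm{\Omega_{ij}}_{\mathfrak{o}(n)}^2)^{1/2}$ and applying the triangle inequality in $\R^2$ and then in $L^2(I)$ controls $\norm{N}_{L^2}$ cleanly but only feeds back the cross term into the quadratic energy, so the step that I expect to require the most care is showing that this cross term does not inflate $E[g_{13}]$ beyond $E[g_{12}]+E[g_{23}]$ — equivalently, exploiting a constant-speed/equipartition feature of the minimizers (from the variational equations) to rescale the competitor $g_{13}$ so that the cross contribution is absorbed.
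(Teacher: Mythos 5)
Your treatment of non-negativity, symmetry, and the separation property is correct and essentially the paper's own argument: the same invariance identities, the same constant-$g$ competitor for one direction, and the same appeal to Theorem~\ref{thm:exist-minimizer} for the converse (the paper explicitly notes this creates no circularity). The only omission there is the check that $d$ is well defined on the quotient, which the paper does directly by conjugating, $\tilde{g}(s) = g_2\, g(s)\, g_1^{-1}$, and invoking $O(n)$-invariance of $\norm{\slot}_L$ and $\Ad$-invariance of $\norm{\slot}_{\mathfrak{o}(n)}$ --- a one-line computation you should add, since it does not follow from your other axioms unless the triangle inequality holds.

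The gap you flagged in the triangle inequality is real, and moreover it cannot be closed: the quadratic energy simply does not satisfy the triangle inequality, so no rescaling or equipartition property of minimizers can absorb the cross term. Concretely, take $k=1$, $I=[0,1]$, $c_1 \equiv 0$, $c_2(s) = sv$, $c_3(s) = 2sv$ with $v \neq 0$. Since $j^{1,k}c_1 = 0$, the matching term is independent of $g$ and the kinetic term is minimized by constant $g$, so $d(c_1,c_3) = 2\lambda_1\norm{v}^2$, while $d(c_1,c_2) = \tfrac{\lambda_1}{2}\norm{v}^2$ and $d(c_2,c_3) \le E[e;c_2,c_3] = \tfrac{\lambda_1}{2}\norm{v}^2$, giving $d(c_1,c_3) > d(c_1,c_2) + d(c_2,c_3)$; note the minimizers here have zero kinetic energy, so there is nothing for an equipartition argument to exploit. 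The paper's own proof stumbles at exactly the point where you hesitated: in its chain of estimates it bounds $\norm[\big]{hg\,j^{1,k}c_1 - j^{1,k}c_3}_L^2$ by $\norm[\big]{h(g\,j^{1,k}c_1 - j^{1,k}c_2)}_L^2 + \norm[\big]{h\,j^{1,k}c_2 - j^{1,k}c_3}_L^2$, i.e.\ it uses $\norm{a+b}^2 \le \norm{a}^2 + \norm{b}^2$, silently discarding the cross term $2\inner{a}{b}_L$ --- which is strictly positive in the example above. What your combined-norm computation actually proves is the pointwise bound $N_{13}(s) \le N_{12}(s) + N_{23}(s)$ together with Minkowski's inequality in $L^2(I)$, hence $\sqrt{E[g_{23}g_{12};c_1,c_3]} \le \sqrt{E[g_{12};c_1,c_2]} + \sqrt{E[g_{23};c_2,c_3]}$. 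Since the remaining metric axioms survive taking square roots, your machinery yields a correct and complete proof that $\sqrt{d}$ (equivalently, $d$ redefined with a square root on the energy) is a distance on $C^k(I,\R^n)/E(n)$; you should finish your write-up in that form rather than continue hunting for a way to absorb the cross term in the quadratic version.
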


\begin{remark}
  We expect that the space of $C^k$ curves modulo rigid
  transformations has a completion under the
  distance~\eqref{eq:distance} to $H^k(I,\R^n)/E(n)$, i.e.\ curves
  whose $k$-th derivative is square-integrable, but we do not prove this.
  Note that since $g \in H^1$, it follows in particular that
  $g \in L^\infty$ and therefore its action on $j_s^{1,k} c_1$
  in~\eqref{eq:energy_functional} would still be well-defined.
\end{remark}

\begin{proof}
We first check that $d$ is well-defined as a function on
$C^k(I,\R^n) / E(n)$. If $c_1,c_2$ are rigidly equivalent,
then we have $g c_1(s) + x = c_2(s)$ for all $s \in I$ and a fixed
$(g, x) \in E(n)$. For the derivatives, we have that
$g \cdot j^{1, k}_s c_1 = j^{1, k}_s c_2$, and viewing this $g$ as a
constant function into $O(n)$, it is then immediately verified that
$E[g;c_1,c_2] = 0$, thus $d(c_1,c_2) = 0$. More generally, let
$\tilde{c}_i \in [c_i]$, $i=1,2$, that is, we have
$(g_i,x_i) \in E(n)$ such that $\tilde{c}_i = (g_i,x_i) \cdot c_i$ and
thus $j^{1,k} \tilde{c}_i = g_i \cdot j^{1,k} c_i$. Let
$g \in H^1(I,O(n))$ be arbitrary. If we set
\begin{equation}\label{eq:transform_g}
  \tilde{g}(s) = g_2 \, g(s) \, g_1^{-1},
\end{equation}
and use that $O(n)$ acts by isometry on $V$ and that the inner product
on $\mathfrak{o}(n)$ is $\Ad$-invariant, then we see that
\begin{align}
  E[\tilde{g};\tilde{c}_1,\tilde{c}_2]
  &= \frac{1}{2} \int_I \Big(
       \norm[\big]{(g_2 g(s) g_1^{-1}) j^{1,k} \tilde{c}_1 - j^{1,k} \tilde{c}_2}_L^2
      +\norm[\big]{(g_1 g(s)^{-1} g_2^{-1})(g_2 g'(s) g_1^{-1})}^2 \Big) \d s \notag\\
  &= \frac{1}{2} \int_I \Big(
       \norm[\big]{g_2\big(g(s) j^{1,k} c_1 - j^{1,k} c_2\big)}_L^2
      +\norm[\big]{g(s)^{-1} g'(s)}^2 \Big) \d s \notag\\
  &=  E[g;c_1,c_2]. \label{eq:equivalence_energy}
\end{align}
Since~\eqref{eq:transform_g} defines an isomorphism of $H^1(I,O(n))$, it
follows that the infima on both sides
of~\eqref{eq:equivalence_energy} are equal, so $d$ descends to the
quotient $C^k(I,\R^n)/E(n)$.

Let us check the distance properties. First of all, it is clear that
$d(c_1,c_2) \ge 0$. Secondly, if $c_1 = c_2$, then clearly choosing
$g(s) = e$ yields $d(c_1,c_2) = 0$. By the discussion above, the same
holds if $c_1$ and $c_2$ are rigidly equivalent.

Conversely, to prove that $d(c_1,c_2) > 0$ for any two curves
that are not rigidly equivalent, we require the assumption that the
norm weight of the first order jet is nonzero,\footnote{%
  We can relax the assumption to allow weights $\lambda_i \ge 0$ for $i \ge 2$. Then
  $\inner{\slot}{\slot}_L$ is not positive definite anymore on $V$,
  but the induced $\flat\colon V \to V^\ast$ is still well-defined.%
} i.e.~$\lambda_1 > 0$.
Otherwise we could choose $c_1(s) = 0$ and
$c_2(s) = s v$ with $v \in \R^n$ nonzero, and find that for $g(s) = e$
we have $\norm{j^{1,k}_s c_2 - g(s) \cdot j^{1,k}_s c_1} = 0$. Hence
we would have $d(c_1,c_2) = 0$, while the curves are not related by a
rigid transformation.

By Theorem~\ref{thm:exist-minimizer} there exists a minimizer
$g^* \in H^1(I,O(n))$ of the distance $d(c_1,c_2) = 0$ (note that
there is no circular dependency as this theorem does not depend on $d$
being a distance). This implies that $\norm{\Omega}^2 = 0$, hence
$\Omega = 0$ and $g(s) = g \in O(n)$ is constant. The fact that the
first term in~\eqref{eq:energy_functional} must also be zero then
implies that $c_2'(s) = g \cdot c_1'(s)$, hence $c_1,c_2$ are related
by a rigid transformation. Thus, $d(c_1,c_2) = 0$ if and only if
$c_1,c_2$ are rigidly equivalent.

Furthermore, we check that $d$ satisfies the triangle inequality.
Using the infimum definition, let $g,h \in H^1(I,O(n))$ be
approximate minimizers, i.e.~$E[g;c_1,c_2] \le d(c_1,c_2) + \epsilon$
and $E[h;c_2,c_3] \le d(c_2,c_3) + \epsilon$. We suppress the argument
$s$ to obtain
\begin{align*}
  d(c_1,c_3)
  &\le E[h \cdot g;c_1,c_3]\\
  &=   \frac{1}{2} \int_I \Big[ \norm[\big]{h g j^{1,k} c_1 - j^{1,k} c_3}_L^2
                               +\norm{g^{-1}h^{-1}(hg)'}^2 \Big] \d s\\
  &\le \frac{1}{2} \int_I \Big[
                    \Big(\norm[\big]{h(g j^{1,k} c_1 - j^{1,k} c_2)}_L^2
                        +\norm[\big]{  h j^{1,k} c_2 - j^{1,k} c_3}_L^2 \Big)\\
  &\hspace{1.5cm} + \Big(\norm{g^{-1}h^{-1}h'g}^2 + \norm{g^{-1}h^{-1}hg'}^2 \Big)\Big] \d s\\
  &= E[g;c_1,c_2] + E[h;c_2,c_3] \le d(c_1,c_2) + d(c_2,c_3) + 2\epsilon.
\end{align*}
Since such $g,h$ can be found for any $\epsilon > 0$, the triangle
inequality follows.

Symmetry of $d$ follows, since we have for any $g \in H^1(I,O(n))$ that
\begin{align*}
  E[g;c_1,c_2]
  &= \frac{1}{2} \int_I \Big( \norm[\big]{g j^{1,k} c_1 - j^{1,k} c_2}_L^2
                             +\norm{g^{-1}g'}^2 \Big) \d s\\
  &= \frac{1}{2} \int_I \Big( \norm[\big]{j^{1,k} c_1 - g^{-1} j^{1,k} c_2}_L^2
                             +\norm{-g'g^{-1}}^2 \Big) \d s
  =  E[g^{-1};c_2,c_1].
\end{align*}
This concludes the proof that $d$ is a distance on $C^k(I,\R^n)/E(n)$.
\end{proof}

\section{Variational equations for curve registration}
\label{sec:variational-eqns}

We now look for necessary conditions for a curve $g\colon I \to O(n)$
to be a critical point of the energy
functional~\eqref{eq:energy_functional}. Note that with the
notation~\eqref{eq:def-Q-Omega} the energy functional becomes
\[
E[g(\cdot)] = \frac{1}{2} \int_I \left(
    \norm{Q(s)}_L^2 + \norm{\Omega(s)}_{\mathfrak{o}(n)}^2
  \right) \d s.
\]

First of all, we have to consider curves $g \in H^2(I,O(n))$ since the
term $\Omega(s)$ will get differentiated when calculating the
variational equations, cf.~\cite[p.~247]{AbMa1978} for remarks and
references in case of Lagrangian mechanics. In
section~\ref{sec:exist-minimizer} we will address this issue further
and prove that a minimizer of $E$ is necessarily a function
$g \in C^2(I,O(n)) \subset H^1(I,O(n))$.

Consider a family of curves $g_\epsilon \in H^2(I,O(n))$,
which depends smoothly on the parameter $\epsilon$ in a small
neighborhood around $0$, and consider the effect on $E$ of varying
$\epsilon$ around $0$. For the first variation, we have
\[
	\delta E :=
		\frac{d}{d \epsilon} E(g_\epsilon) \Big|_{\epsilon = 0}
	= \int_I \Big( \inner{Q(s)}{\delta Q(s)}_L
	              +\inner{\Omega(s)}{\delta \Omega(s)}_{\mathfrak{o}(n)} \Big) \d s,
\]
and it now remains to express the variations $\delta Q$ and $\delta
\Omega$ in terms of $\displaystyle \delta g = \frac{d g_\epsilon}{d
  \epsilon}\Big|_{\epsilon=0}$. For the former, we have
\[
	\delta Q(s) = \delta g(s) j^{1, k}_s c_1
		= g(s) g(s)^{-1} \delta g(s) j^{1, k}_s c_1
		= g(s) \sigma(s) j^{1, k}_s c_1,
\]
where in the last step we have introduced the quantity $\sigma(s) :=
g(s)^{-1} \delta g(s) \in \mathfrak{o}(n)$. For the variation $\delta
\Omega$, we start with the definition $\Omega_\epsilon =
g^{-1}_\epsilon (s) g'_\epsilon(s)$, and take the derivative with
respect to $\epsilon$ to obtain
\[
	\delta \Omega = (\delta g^{-1}) g' + g^{-1} \delta g'
		= - g^{-1} \delta g g^{-1} g' + g^{-1} \delta g'.
\]
Noting that $\sigma' = - g^{-1} g' g^{-1} \delta g + g^{-1} \delta g'$,
this expression can be rewritten to yield
\[
	\delta \Omega  = \sigma' - \sigma \Omega + \Omega \sigma
		 = \sigma' - [\sigma, \Omega].
\]
This expression is familiar from classical mechanics, where it appears
in Euler-Poincar\'e reduction theory (see~\cite{MaRa1994}) or under
the guise of Lin constraints (see~\cite{CeMa1987}).

With these two expressions, we may now rewrite the expression for the
variation of $E$ as
\begin{equation} \label{eq:variation_E}
	\delta E = \int_I \Big(
		  \inner{ Q }{ g \sigma (j^{1,k} c_1) }_L
		+ \inner{ \Omega }{ \sigma' - [\sigma, \Omega] }_{\mathfrak{o}(n)}
		\Big) \d s.
\end{equation}
The first term may be expressed in terms of the diamond
operator~\eqref{eq:diamond_operator_definition} as
\[
  \inner{Q}{g \sigma (j^{1,k} c_1)}_L =
  \inner{g^{-1} Q}{\sigma (j^{1,k} c_1)}_L =
  \inner{j^{1,k} c_1 \diamond (g^{-1} Q)^\flat}{\sigma}_{\mathfrak{o}(n)}.
\]
Using the antisymmetry of the diamond map
(lemma~\ref{lem:antisymmetry}), we now see that
\[
	j^{1,k} c_1 \diamond (g^{-1} Q)^\flat
		= j^{1,k} c_1 \diamond (j^{1, k} c_1 - g^{-1} j^{1,k} c_2)^\flat
		= (g^{-1} j^{1,k} c_2) \diamond (j^{1,k} c_1)^\flat,
\]
so that
\[
	\inner{ Q }{ g \sigma (j^{1,k} c_1) }_L =
  \inner{ g^{-1} j^{1,k} c_2 \diamond (j^{1,k} c_1)^\flat }{ \sigma }_{\mathfrak{o}(n)}.
\]

To simplify the second term in~\eqref{eq:variation_E}, observe that
$\inner{ \Omega }{ [\sigma, \Omega] }_{\mathfrak{o}(n)} = 0$ for all
$\sigma, \Omega \in \mathfrak{o}(n)$.
This can be verified by a quick calculation, or
by noting that this is a consequence of the $\Ad$-invariance
of the inner product~\eqref{eq:killing_inner_product} on
$\mathfrak{o}(n)$.

Putting all of these results together, we then arrive at
\begin{align*}
	\delta E
  & = \int_I \Big(
			  \inner{ (g^{-1} j^{1,k} c_2)
					\diamond (j^{1,k} c_1)^\flat }{ \sigma }_{\mathfrak{o}(n)}
			+ \inner{ \Omega }{ \sigma' }_{\mathfrak{o}(n)} \Big) \d s \\
	& = \int_I
			 \inner{ (g^{-1} j^{1,k} c_2)
					\diamond (j^{1,k} c_1)^\flat -
				\Omega' }{ \sigma }_{\mathfrak{o}(n)} \d s +
			\inner{ \Omega(1) }{ \sigma(1) }_{\mathfrak{o}(n)} -
			\inner{ \Omega(0) }{ \sigma(0) }_{\mathfrak{o}(n)},
\end{align*}
where we have integrated by parts to obtain the second expression.

In order for a curve $g(s)$ to be a critical point of $E$, the
preceding expression must vanish for all variations $\sigma$, so that
we arrive at the following theorem.
\begin{thm}\label{thm:var_equations}
    A curve $g \in H^2(I,O(n))$ is a critical point of the
    energy functional~\eqref{eq:energy_functional} if and only if it
    satisfies the equation
    \begin{equation} \label{eq:omega_equations}
      \Omega'(s) = (g(s)^{-1} j^{1,k}_s c_2) \diamond (j^{1,k}_s c_1)^\flat
    \end{equation}
    with boundary conditions $\Omega(0) = \Omega(1) = 0$ and
    $\Omega(s) = g(s)^{-1} g'(s)$.
\end{thm}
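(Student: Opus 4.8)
The plan is to read the theorem off from the first-variation formula derived just above, by combining the fundamental lemma of the calculus of variations with a free-boundary argument. By definition, $g$ is a critical point precisely when $\delta E = 0$ for every admissible variation. First I would pin down which variations are admissible. Since $g_\epsilon$ is a family of curves in $O(n)$, the infinitesimal variation $\delta g$ lies in $\T_g O(n) = g\,\mathfrak{o}(n)$, so that $\sigma(s) := g(s)^{-1}\delta g(s)$ takes values in $\mathfrak{o}(n)$; conversely, any sufficiently regular curve $\sigma\colon I \to \mathfrak{o}(n)$ is realized by the family $g_\epsilon(s) = g(s)\exp(\epsilon\,\sigma(s))$, which keeps $g_\epsilon$ in $O(n)$ and satisfies $g^{-1}\delta g = \sigma$. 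Hence the admissible variations are exactly the $H^2$ curves in $\mathfrak{o}(n)$, with \emph{no} constraint at the endpoints, because the infimum in~\eqref{eq:distance} is taken over all of $H^1(I,O(n))$ with free boundary values.

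Next I would extract the interior equation. Restricting to variations with $\sigma(0) = \sigma(1) = 0$ annihilates the two boundary terms in the formula for $\delta E$, leaving $\int_I \inner{F}{\sigma}_{\mathfrak{o}(n)} \d s = 0$, where $F := (g^{-1}j^{1,k}c_2)\diamond(j^{1,k}c_1)^\flat - \Omega'$. The integration by parts that produced this formula is legitimate precisely because $g \in H^2(I,O(n))$ forces $\Omega = g^{-1}g' \in H^1(I,\mathfrak{o}(n))$, so $\Omega' \in L^2$ and hence $F \in L^2$. Testing against $\sigma(s) = \varphi(s)\,\Xi$ with $\varphi \in C_c^\infty$ supported in the interior of $I$ and $\Xi \in \mathfrak{o}(n)$ fixed, the scalar fundamental lemma gives $\inner{F(s)}{\Xi}_{\mathfrak{o}(n)} = 0$ for almost every $s$; letting $\Xi$ range over a basis of $\mathfrak{o}(n)$ yields $F = 0$ almost everywhere, which is exactly the Euler--Lagrange equation~\eqref{eq:omega_equations}.

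Finally I would obtain the natural boundary conditions. Once $F = 0$, the integral in $\delta E$ vanishes for every $\sigma$, so criticality reduces to $\inner{\Omega(1)}{\sigma(1)}_{\mathfrak{o}(n)} - \inner{\Omega(0)}{\sigma(0)}_{\mathfrak{o}(n)} = 0$ for all admissible $\sigma$. Since the endpoint values $\sigma(0)$ and $\sigma(1)$ may be prescribed independently and arbitrarily in $\mathfrak{o}(n)$, and the inner product $\inner{\slot}{\slot}_{\mathfrak{o}(n)}$ is nondegenerate, this forces $\Omega(0) = \Omega(1) = 0$. The converse direction is immediate: substituting~\eqref{eq:omega_equations} together with these boundary conditions back into the first-variation formula makes $\delta E$ vanish for every $\sigma$, so any solution of the system is a critical point. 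I expect the only genuinely delicate point to be the regularity bookkeeping --- confirming that $\Omega \in H^1$ so that both the integration by parts and the $L^2$ form of the fundamental lemma apply --- which is exactly the reason the hypothesis $g \in H^2$ is imposed; the remainder is the standard free-endpoint variational argument.
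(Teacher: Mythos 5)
Your proposal is correct and follows essentially the same route as the paper: the paper derives exactly the first-variation formula you start from and then simply asserts that it ``must vanish for all variations $\sigma$'' to obtain the ODE and the natural boundary conditions, while you fill in the standard details (realizing variations via $g_\epsilon = g\exp(\epsilon\sigma)$, the fundamental lemma on compactly supported $\sigma$, the free-endpoint argument, the converse, and the check that $g \in H^2$ gives $\Omega \in H^1$ so the integration by parts is legitimate). No substantive difference in approach.
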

Note that the equations~\eqref{eq:omega_equations} are second-order
differential equations when expressed in terms of $g(s)$. The boundary
conditions at each end can be written as $g'(0) = g'(1) = 0$, so that
we obtain a two-point boundary value problem for $g(s)$.

\section{Examples}
\label{sec:examples}

In this section we explicitly calculate minimizing curves and
distances for a few simple families of curves. First, let us consider
the most simple case of curves in the plane and only taking into
account first derivatives, i.e.~$n=2$ and~$k=1$. Let
$\theta \in \R$ parametrize the group\footnote{%
  We only consider $SO(n)$, the connected component of the identity in
  $O(n)$ here, thus disallowing orientation reversing minimizers.%
} $SO(2)$ in the usual sense,
\begin{equation}\label{eq:param-SO2}
  R(\theta) =
  \begin{pmatrix}
    \cos(\theta) & -\sin(\theta) \\
    \sin(\theta) &  \cos(\theta)
  \end{pmatrix} \in SO(2).
\end{equation}
Thus, $R\colon \R \to SO(2)$ is a Lie group homeomorphism, inducing
the Lie algebra isomorphism
\begin{equation}\label{eq:param-so2}
  \D_0 R\colon \R \to \mathfrak{so}(2), \qquad
  \omega \mapsto
  \begin{pmatrix}
    0      & -\omega \\
    \omega & 0
  \end{pmatrix}.
\end{equation}
For tangent vectors $A,B \in \R^2$,
expression~\eqref{eq:expression_diamond} for the diamond operator then
reduces to
\begin{equation*}
  A \diamond B^\flat = \frac{\lambda_1}{2}
  \begin{pmatrix}
    0                 & A_1 B_2 - B_1 A_2 \\
    B_1 A_2 - A_1 B_2 & 0
  \end{pmatrix} \in \mathfrak{so}(2),
\end{equation*}
recalling that $\lambda_1$ denotes the parameter of the inner product on
jets, that is, the strength of the `soft constraint'.
With the isomorphism $\D_0 R$ this allows us to write the variational
equation~\eqref{eq:omega_equations} for $\theta(s)$ as
\begin{equation*}
  \theta''(s) =
  (\D_0 R)^{-1}\big( R(\theta(s))^{-1} c_2'(s) \diamond c_1'(s)^\flat\big).
\end{equation*}
Since
$(\D_0 R)^{-1}(A \diamond B^\flat) = -\frac{\lambda_1}{2}(A \times B)$,
where $A \times B$ denotes the determinant of the matrix formed by the
column vectors $A,B$, we can now explicitly write the variational
equation as
\begin{equation}\label{eq:omega_eq_2d}
  \theta''(s) =
  -\frac{\lambda_1}{2}\big(R(-\theta(s)) c_2'(s) \times c_1'(s)\big)
\end{equation}
with boundary conditions $\theta'(0) = \theta'(1) = 0$.

\paragraph{Two straight lines.}
As a first example we consider two straight lines, but possibly
parametrized at nonconstant speed, that is, we consider the family of
curves of the form
\begin{equation}\label{eq:straight-line}
  c(s) = a\,f(s) + b \qquad \text{with } a,b \in \R^2
\end{equation}
and $f \in C^1(I,\R)$ with $f'(s) > 0$ for all $s \in I$.
Note that by using invariance under rigid transformations and by
absorbing the length of $a$ into the
parametrization $f$, we can bring these into the normalized form
$c(s) = f(s)\,e_1$, where $e_1$ is the first standard basis vector.
Taking two such curves $c_1, c_2$, we see that $\theta(s) = 0$
solves~\eqref{eq:omega_eq_2d} with boundary conditions, since
\begin{equation*}
  c_1'(s) \times c_2'(s) = f_1'(s)\,f_2'(s)\, e_1 \times e_1 = 0.
\end{equation*}
We find a corresponding energy
\begin{equation}\label{eq:straight-lines-energy}
  E = \int_0^1 \norm{(f_1'(s) - f_2'(s)) e_1}^2 \d s.
\end{equation}
The kinetic term $\norm{g(s)^{-1}g'(s)}$ is already zero and the
potential term above is as small as possible, since the vectors are
already aligned, hence $\theta(s) = 0$ is the minimizer.
Equation~\eqref{eq:straight-lines-energy} also shows that the distance is
minimal when both straight lines are parametrized at constant speed.
This can be seen by writing $f_i'(s) = a_i + g_i(s)$, where $a$ is the
average velocity, and noting that $a$ and $g$ are perpendicular as $L^2$
functions, thus reducing~\eqref{eq:straight-lines-energy} to
$E = \abs{a_1 - a_2}^2 + \norm{g_1 - g_2}_{L^2}^2$.

\paragraph{A line and a circle.}
Next, for a line $c_1$ and a circle,
$c_2(s) = r(\cos(2\pi s),\sin(2\pi s))$, we obtain
\begin{equation*}
  \theta''(s) = \pi\,\lambda_1\,r\,\cos(2\pi s - \theta(s)).
\end{equation*}
After a coordinate substitution $\theta(s) = 2\pi s - \phi(s)$ we
obtain the pendulum equation
\begin{equation}\label{eq:pendulum}
  \phi''(s) = -\pi\,\lambda_1\,r\,\cos(\phi(s))
\end{equation}
with boundary conditions $\phi'(0) = \phi'(1) = 2\pi$. For
non-overturning oscillations the period is bounded below by
$T_0 = 2\sqrt{\pi/(\lambda_1 r)}$, thus we only expect to see such
solutions when $\lambda_1 \gg 1$, i.e.\ in the regime of a strong
constraint. After resubstituting $\theta$ again, such solutions
correspond to rotating the constant tangent vector of $c_1$
approximately into the rotating tangent vector of the circle $c_2$,
that is, the winding number of $\theta$ is one. For small $\lambda_1$
the kinetic term dominates and gives a minimizer $\theta(s)$ with zero
winding number, see also Figure~\ref{fig:minimizers-line-circle}.
Numerical simulations indicate that the bifurcation takes place at
$\lambda_1 \approx 48.9$ and $E \approx 152$.
\begin{figure}[htb]
  \centering
  \includegraphics[width=7.5cm]{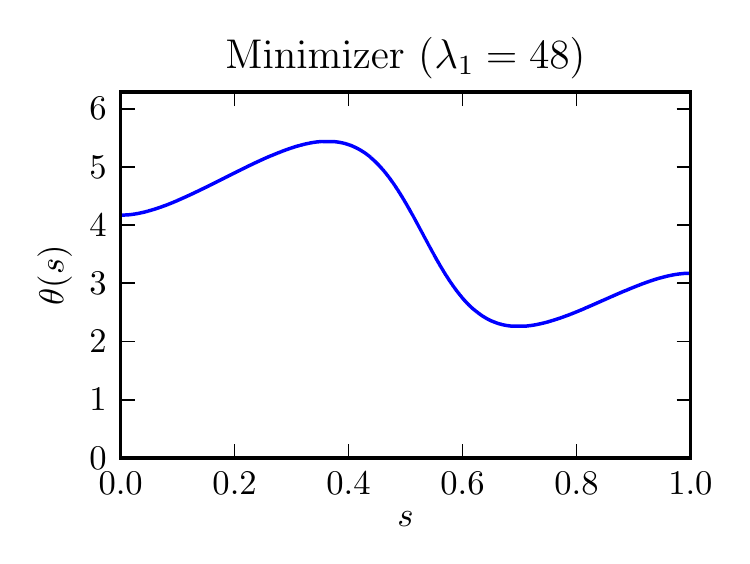}~
  \includegraphics[width=7.5cm]{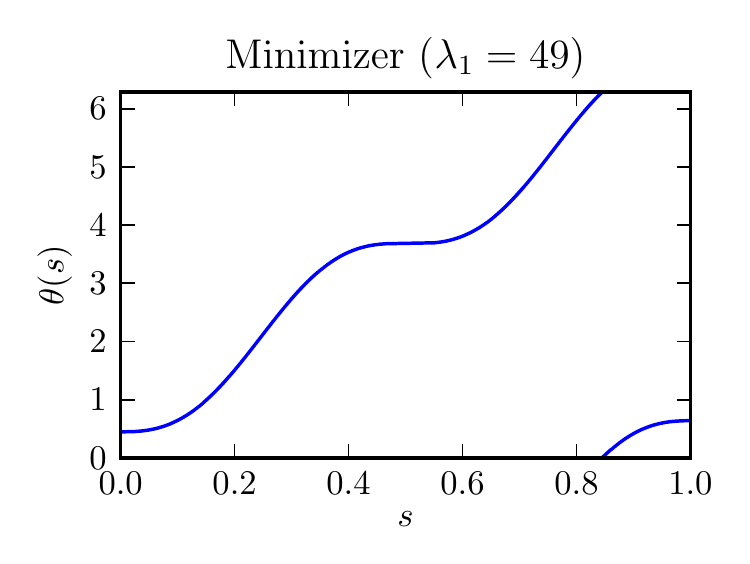}
  \caption{The minimizer $\theta(s)$ for the distance between a line
    and a circle at $\lambda_1 = 48$ and $\lambda_1 = 49$.}
  \label{fig:minimizers-line-circle}
\end{figure}

\paragraph{More general curves.}
If we consider curves beyond these simple examples, then we quickly
run into boundary value problems that do not have explicit solutions
anymore. For example, comparing a straight line $c_1$ to a curve that
is a graph, $c_2(s) = (s,h(s))$, we find that~\eqref{eq:omega_eq_2d}
reduces to
\begin{equation*}
  \theta''(s) = -\frac{\lambda_1}{2}\big(\sin(\theta(s)) - \cos(\theta(s)) h'(s)\big).
\end{equation*}
This is reminiscent of the pendulum equation~\eqref{eq:pendulum},
except that the magnitude and direction of gravity now explicitly
depend on $s$ through $h'(s)$. Although analytical solutions are out
of reach here, these equations can easily be solved numerically (also
in higher dimensions) and minimizers can be found by using adjoint
equations to solve the boundary value problem with a Newton--Raphson
method.

\paragraph{Equations in $\R^3$.}
Finally, let us express the equations~\eqref{eq:omega_equations} in
three dimensions, first with $k = 1$. The identification of Lie
algebra elements $\Omega \in \mathfrak{o}(n)$ and
$\omega \in (\R^3, \times)$ via
$\Omega_{ij} = \epsilon_{ijk}\,\omega_k$ allows us to identify
$\Omega = A \diamond B^\flat$ with
\begin{equation*}
  \omega_k
  = \frac{1}{2} \epsilon_{ijk} \Omega_{ij}
  = \frac{\lambda_1}{2} \epsilon_{ijk} A_i B_j.
\end{equation*}
Equation~\eqref{eq:omega_equations} then becomes
\begin{equation}\label{eq:omega_eq_3d}
  \omega'(s) = \frac{\lambda_1}{2} \big(g(s)^{-1}\cdot c_2'(s) \times c_1'(s)\big)
\end{equation}
where $g(s) \in O(3)$ and $\times$ now denotes the usual cross product
on $\R^3$. Note that this generalizes to arbitrary $k$ as
\begin{equation*}
  \omega'(s) = \frac{1}{2} \sum_{i=1}^k \lambda_i
                 \big(g(s)^{-1}\cdot c_2^{(i)}(s) \times c_1^{(i)}(s)\big).
\end{equation*}

\section{The Existence of a Minimizer}
\label{sec:exist-minimizer}

We now prove that there exists a minimizer for the curve-matching
functional. Our approach follows closely the proof of minimizers for
the LDDMM functional in~\cite{BrHo2013,Yo2010}, while also using some
theory on Hilbert manifolds, see~\cite{Pa1963,Kl1995}.

Let $O(n)$ be the orthogonal group in $n$ dimensions, with Lie algebra
$\mathfrak{o}(n)$. In~\eqref{eq:killing_inner_product} we defined the
inner product $\inner{\xi}{\eta}_{\mathfrak{o}(n)} := - \tr(\xi\eta)$.
As this is a positive definite $\Ad$-invariant quadratic form, it
determines a bi-invariant Riemannian metric on $O(n)$ with induced
distance $d(\slot,\slot)_{O(n)}$. Therefore,
$L^2(I,\mathfrak{o}(n))$ is a Hilbert space and $H^1(I,O(n))$ can be
viewed as a $C^\infty$ Hilbert manifold~\cite[Thm.~2.3.12]{Kl1995}
(see also~\cite[\S 13, Thm.~6]{Pa1963} but note that Palais'
definition of manifold structure is slightly different due to the use
of an embedding).

We briefly recall Definition~\cite[Def.~2.3.1]{Kl1995} of the Hilbert
manifold $H^1(I,M)$ where $M$ is a complete Riemannian manifold: it
consists of the curves $c \in C^0(I,M)$ such that for any chart
$(\phi,U)$ of $M$ and $I' = c^{-1}(U)$ we have
$\phi \circ c \in H^1(I',\R^n)$. Canonical charts for $H^1(I,M)$ are
given by the exponential map along piecewise smooth curves $c$.

This Hilbert manifold comes equipped with a natural
inner product which turns it into an infinite-dimensional Riemannian
manifold. Let $c \in H^1(I,M)$ and let
$\lambda,\mu \in H^1(c^*(\T M))$ denote sections in the pullback
bundle, i.e.~$\lambda(s),\mu(s) \in \T_{c(s)} M$ for all $s \in I$.
This pullback bundle serves as a natural chart for the tangent space
$\T_c H^1(I,M)$. The Riemannian metric on $H^1(I,M)$ is given in this
chart by
\begin{equation}\label{eq:metric_H1}
  \inner{\lambda}{\mu}_{H^1(I,M)}
  = \int_I \Big( \inner{\lambda(s)}{\mu(s)}_M
               + \inner{\nabla\lambda(s)}{\nabla\mu(s)}_M \Big) \d s,
\end{equation}
where $\nabla$ denotes the covariant derivative pulled back from
$\T M$ to $c^*(\T M)$.

Having this preliminary theory at hand, we can now state the following
result in the case that $M = O(n)$.
\begin{thm}\label{thm:exist-minimizer}
  The energy functional $E\colon H^1(I,O(n)) \to \R$ defined
  by~\eqref{eq:energy_functional} attains its global minimum on
  $H^1(I,O(n))$. Moreover, any minimizer $g^* \in H^1(I,O(n))$
  satisfies the variational equations~\eqref{eq:omega_equations} and
  in particular $g^* \in C^2(I,O(n))$.
\end{thm}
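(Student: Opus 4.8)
The plan is to establish existence by the direct method in the calculus of variations, and then to upgrade any minimizer to a classical solution of~\eqref{eq:omega_equations} by a bootstrap argument. The computation that makes everything tractable is the observation that, since $g(s) \in O(n)$ so that $g(s)^{-1} = g(s)^T$, the kinetic term in~\eqref{eq:energy_functional} reduces to the ambient Frobenius norm of the derivative,
\[
  \norm{g(s)^{-1} g'(s)}_{\mathfrak{o}(n)}^2
  = \tr\big( (g^T g')^T (g^T g') \big)
  = \tr\big( (g')^T g g^T g' \big)
  = \tr\big( (g')^T g' \big),
\]
for a.e.\ $s$. Thus the second term of $E$ is, up to the factor $\tfrac12$, nothing but the squared $L^2$-norm of $g'$ in the flat matrix space $\R^{n \times n}$, which is exactly the ingredient needed for weak lower semicontinuity.

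For existence, I would take a minimizing sequence $g_m \in H^1(I,O(n))$ with $E[g_m] \to \inf E$ (the infimum being finite since $E \ge 0$). Viewing $g_m$ as maps into $\R^{n \times n}$, compactness of $O(n)$ gives a uniform bound $\norm{g_m}_{L^\infty} \le \sqrt{n}$, while the identity above yields $\norm{g_m'}_{L^2}^2 \le 2E[g_m]$; together these bound $(g_m)$ in $H^1(I,\R^{n\times n})$. Passing to a subsequence, $g_m \rightharpoonup g^*$ weakly in $H^1$, and the compact embedding $H^1(I) \hookrightarrow C^0(I)$ upgrades this to uniform convergence $g_m \to g^*$; since $O(n)$ is closed, the uniform limit satisfies $g^*(s) \in O(n)$, so $g^* \in H^1(I,O(n))$. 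It then remains to show $E[g^*] \le \liminf_m E[g_m]$. The first term is in fact continuous along the sequence: $g_m \to g^*$ uniformly and the jet fields $j^{1,k} c_i$ are continuous (as $c_i \in C^k$), so the integrands converge uniformly on the bounded interval $I$. The second term is weakly lower semicontinuous because $g_m' \rightharpoonup (g^*)'$ in $L^2$ and the $L^2$-norm is weakly lower semicontinuous. Combining the two, $g^*$ realizes the infimum.

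For the variational equations and regularity, I would first note that a minimizer is a critical point, so the first-variation identity of section~\ref{sec:variational-eqns} applies. The one subtlety is that that computation was carried out for $H^2$ curves, whereas $g^*$ is a priori only $H^1$; the point is that the weak form is already meaningful at the $H^1$ level. Taking variations $g_\epsilon = g^* \exp(\epsilon\sigma)$ with $\sigma \in H^1(I,\mathfrak{o}(n))$, all of $\Omega \in L^2$, $\sigma' \in L^2$, and $[\sigma,\Omega] \in L^2$ are square-integrable, and the $\Ad$-invariance identity $\inner{\Omega}{[\sigma,\Omega]}_{\mathfrak{o}(n)} = 0$ survives pointwise, so vanishing of the first variation gives the weak equation
\[
  \int_I \Big( \inner{F}{\sigma}_{\mathfrak{o}(n)} + \inner{\Omega}{\sigma'}_{\mathfrak{o}(n)} \Big) \d s = 0,
  \qquad
  F := \big((g^*)^{-1} j^{1,k} c_2\big) \diamond (j^{1,k} c_1)^\flat,
\]
for all such $\sigma$. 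Restricting first to $\sigma$ vanishing at the endpoints identifies this as $\Omega' = F$ in the distributional sense, and admitting the full class of $\sigma$ then forces the natural boundary conditions $\Omega(0) = \Omega(1) = 0$, recovering Theorem~\ref{thm:var_equations}.

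The bootstrap then closes the argument, and this last regularity step is where I expect the only genuine work. Because $g^* \in H^1 \hookrightarrow C^0$ and $c_1,c_2 \in C^k$, the right-hand side $F$ is continuous in $s$; hence $\Omega' = F \in C^0$ forces $\Omega \in C^1$. Since $(g^*)' = g^* \Omega$ with $g^* \in C^0$ and $\Omega \in C^0$, we get $(g^*)' \in C^0$, i.e.\ $g^* \in C^1$; feeding this back, $(g^*)' = g^* \Omega$ is a product of $C^1$ factors, so $g^* \in C^2(I,O(n))$ and~\eqref{eq:omega_equations} holds classically. The main obstacle to watch is precisely this interplay between the regularity of the minimizer and the derivation of the Euler--Poincar\'e equation: one must justify the weak equation at the $H^1$ level before the $H^2$-based manipulations become legitimate, after which the bootstrap removes the gap.
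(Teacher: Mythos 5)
Your proof is correct, and for the existence part it takes a genuinely different and more elementary route than the paper. The paper transports the problem to $O(n)\times L^2(I,\mathfrak{o}(n))$ via the map $\phi(g)=(g(0),\,s\mapsto g(s)^{-1}g'(s))$ (Proposition~\ref{prop:equiv-hilbert-mflds}), extracts a weakly convergent subsequence of the velocities $\xi_i$, and then needs Lemma~\ref{lem:phi-inv-cont} --- continuity of the reconstruction map $\phi^{-1}$ from bounded weakly convergent sequences in $L^2$ to uniformly convergent sequences of curves --- which is imported from the LDDMM literature (Theorem~\ref{thm:weak-conv-flow}) and requires embedding $O(n)$ in $M_n$, extending Lie-algebra elements to compactly supported vector fields, and comparing the intrinsic and extrinsic distances. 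You bypass all of this by observing that bi-invariance of the metric makes the kinetic term equal to the flat Dirichlet energy $\tfrac12\int_I \tr\big((g')^Tg'\big)\d s$ of the matrix-valued curve, so that a minimizing sequence is bounded in the linear space $H^1(I,\R^{n\times n})$ and ordinary weak $H^1$ compactness plus the compact embedding $H^1(I)\hookrightarrow C^0(I)$ (valid since $I$ is one-dimensional) delivers uniform convergence to a limit that stays in the closed set $O(n)$. The conclusion is then reached by the same two ingredients as in the paper: continuity of the potential term under uniform convergence and weak lower semicontinuity of the $L^2$-norm of the derivative. Your shortcut is specific to the compact-group, bi-invariant-metric setting (which is exactly the setting here), whereas the paper's machinery would survive in situations where the energy controls only $\xi=g^{-1}g'$ and not $g'$ itself; that is what each approach buys. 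For the second half of the theorem, your derivation of the weak Euler--Poincar\'e equation with $H^1$ variations followed by the bootstrap $F\in C^0\Rightarrow\Omega\in C^1\Rightarrow g^*\in C^1\Rightarrow g^*\in C^2$ is a self-contained version of what the paper does by invoking Proposition~\ref{prop:E-differentiable} and citing regularity results for one-dimensional variational problems; both are sound, and your explicit identification of the natural boundary conditions $\Omega(0)=\Omega(1)=0$ from the unrestricted class of variations is exactly the right mechanism. The only step you gloss over is the justification for differentiating under the integral sign when computing $\frac{d}{d\epsilon}E(g_\epsilon)\big|_{\epsilon=0}$ for merely $H^1$ variations (the paper isolates this in Proposition~\ref{prop:E-differentiable}); it is routine dominated convergence, but worth a sentence.
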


We first identify the space $H^1(I,O(n))$ with a simpler one. This
identification we can make in general, for any Lie group $G$ equipped
with a left (or right) invariant inner product, such that $G$ is a
Riemannian manifold.

\begin{prop}\label{prop:equiv-hilbert-mflds}
  Let $G$ be a Lie group equipped with a left-invariant inner product
  and let $\mathfrak{g}$ denote its Lie algebra. There is a natural
  bijection $\phi\colon H^1(I,G) \to G \times L^2(I,\mathfrak{g})$
  given by
  \begin{equation}
    \phi(g) = (g(0), s \mapsto g(s)^{-1} g'(s)),
  \end{equation}
  and $g = \phi^{-1}(g_0,\xi)$ is given by the reconstruction equation
  \begin{equation}\label{eq:reconstruction}
    g'(s) = g(s) \xi(s) \quad \text{with initial conditions } g(0) = g_0.
  \end{equation}
\end{prop}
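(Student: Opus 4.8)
The plan is to check that $\phi$ is well-defined and then to recognize the reconstruction equation~\eqref{eq:reconstruction} as its two-sided inverse. The whole argument rests on the left-invariance of the metric. Since the definition of $H^1(I,G)$ already requires $g \in C^0(I,G)$, the evaluation $g(0)$ is meaningful. Moreover, the left logarithmic derivative $\xi(s) := g(s)^{-1}g'(s) = (L_{g(s)^{-1}})_* g'(s)$ lies in $\T_e G = \mathfrak{g}$, and left-invariance gives the pointwise isometry $\norm{\xi(s)}_{\mathfrak{g}} = \norm{g'(s)}_{g(s)}$. Integrating this over $I$ shows $\xi \in L^2(I,\mathfrak{g})$ precisely when $g' \in L^2$, which is exactly the defining condition of $H^1(I,G)$; hence $\phi$ maps into $G \times L^2(I,\mathfrak{g})$ and is well-defined.

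Injectivity is then immediate: if $\phi(g_1) = \phi(g_2)$, the two curves share an initial value and a logarithmic derivative, so both solve $g'(s) = (L_{g(s)})_*\xi(s)$ with the same initial condition, and uniqueness of solutions (established below) forces $g_1 = g_2$. For surjectivity I would, given $(g_0,\xi) \in G \times L^2(I,\mathfrak{g})$, define $g := \phi^{-1}(g_0,\xi)$ as the solution of~\eqref{eq:reconstruction}. Granting such a solution on all of $I$, the identities $g(0) = g_0$ and $g(s)^{-1}g'(s) = \xi(s)$ give $\phi(g) = (g_0,\xi)$ directly, while the same isometry $\norm{g'(s)}_{g(s)} = \norm{\xi(s)}_{\mathfrak{g}} \in L^2(I)$ shows the absolutely continuous curve $g$ lies in $H^1(I,G)$. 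So everything reduces to solving the reconstruction equation.

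The crux --- and the step I expect to be the \emph{main obstacle} --- is the existence and uniqueness of solutions of~\eqref{eq:reconstruction}, because $\xi$ is only square-integrable, hence merely $L^1$ on the bounded interval $I$, and in particular not continuous. This puts us outside the reach of the Picard--Lindel\"of theorem and calls for Carath\'eodory's existence theorem. Viewing $f(s,g) := (L_g)_*\xi(s)$ as a time-dependent vector field on $G$, I would verify the Carath\'eodory hypotheses: $f(\slot,g)$ is measurable (as $\xi$ is measurable and left-translation is smooth), $f(s,\slot)$ is smooth and hence locally Lipschitz, and the left-invariant isometry yields the clean global bound $\norm{f(s,g)}_g = \norm{\xi(s)}_{\mathfrak{g}}$ with $\norm{\xi}_{\mathfrak{g}} \in L^1(I)$. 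These give a unique absolutely continuous solution on a subinterval and, via the same Lipschitz bound, Carath\'eodory uniqueness, which also closes the injectivity argument above.

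Finally I would upgrade this to a global solution on all of $I$. Because $f$ is left-invariant, $h\cdot g$ solves the equation from $h\cdot g(0)$ whenever $g$ does; translating the base point to $e$ and writing the equation in a fixed exponential chart turns it into a coordinate ODE $\dot x = F(x,\xi(s))$ whose right-hand side is independent of the base point and linear in $\xi(s)$. This produces a local-existence time that is uniform over $G$: there is a threshold $\eta > 0$ such that the solution exists on any subinterval with $\int \norm{\xi}_{\mathfrak{g}}\d s \le \eta$. Since $\int_I \norm{\xi}_{\mathfrak{g}}\d s < \infty$, I can split $I$ into finitely many such subintervals and concatenate the pieces, obtaining a solution on all of $I$; working intrinsically on $G$ guarantees it remains $G$-valued, so no completeness assumption on the left-invariant metric is needed. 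In the concrete case $G = O(n)$ of interest here the argument simplifies: the matrix equation $g' = g\xi$ is linear in $g$ with $L^1$ coefficient $\xi$, so Gr\"onwall's inequality rules out finite-time blow-up, and $\tfrac{d}{ds}(g^Tg) = \xi^T g^Tg + g^Tg\,\xi = [\,g^Tg,\xi\,]$ together with $g(0)^Tg(0) = I$ forces $g^Tg \equiv I$, confirming $g(s) \in O(n)$.
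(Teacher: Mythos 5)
Your proof is correct and follows essentially the same route as the paper: both reduce the statement to solving the reconstruction equation~\eqref{eq:reconstruction} as a Carath\'eodory-type ODE, using left-invariance of the metric to get the integrable bound $\norm{\xi}_{L^1(I)} \le \norm{\xi}_{L^2(I)} < \infty$ and then invoking existence and uniqueness to conclude that $\phi^{-1}$ is a two-sided inverse. The only real difference is in the globalization step --- you use a uniform local existence time and concatenate subintervals (supplemented by a Gr\"onwall/orthogonality argument specific to $O(n)$), whereas the paper extends local chart solutions to chart boundaries and patches them together --- and this is a matter of technique rather than substance.
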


\begin{proof}
  For $g \in H^1(I,G)$ it follows that
  $s \mapsto \xi(s) = g(s)^{-1} g'(s) \in L^2(I,\mathfrak{g})$
  since $g' \in L^2(I,\T O(n))$ by definition, and left-translation by
  $g(s)^{-1}$ is a linear isomorphism and continuous (thus bounded) in
  $s$, hence it is a bounded operator on $L^2(I,\T G)$ that maps
  $g(s)^{-1} g'(s) \in \T_e G = \mathfrak{g}$. Clearly, also
  $g(0) \in G$. Hence, $\phi$ is well-defined.

  To prove that $\phi$ is a bijection, we show that $\phi^{-1}$ is
  well-defined and indeed the inverse of $\phi$. Viewing $\xi(s)$ as a
  left-invariant vector field on $G$ turns~\eqref{eq:reconstruction}
  into a Carath\'eodory type\footnote{%
    Ordinary differential equations that are only integrable in time
    are called Carath\'eodory type differential equations. Basically,
    they still exhibit existence and uniqueness of solutions. For more
    details see~\cite[Chap.~2]{Coddington1955:theory-ODEs}
    or~\cite[App.~C]{Yo2010}.%
  } differential equation on $G$, since
  $\xi \in L^2(I,\mathcal{X}(G))$ with respect to local charts
  $(\psi,U)$ on $G$. By left-invariance of the metric on $G$ it
  follows that the left-invariant vector field $\xi$ is bounded by an
  integrable function:
  \[
    \int_I \sup_{g \in G} \norm{\xi(s,g)} \d s
  = \int_I \norm{\xi(s)} \d s
  =   \norm{\xi}_{L^1(I,\mathfrak{g})}
  \le \norm{\xi}_{L^2(I,\mathfrak{g})} < \infty.
  \]
  Theorem~2.1 in~\cite[Chap.~2]{Coddington1955:theory-ODEs} then
  implies local existence and uniqueness of the solution
  of~\eqref{eq:reconstruction} with respect to a chart $(\psi,U)$. By
  Theorem~1.3 in~\cite[Chap.~2]{Coddington1955:theory-ODEs} these
  solutions extend to the boundary of $\psi(U)$, hence they can be
  patched together to a maximal solution $g\colon I \to G$. Solution
  curves are by construction $H^1$ functions with respect to these
  charts and in view of the definition of $H^1(I,G)$, this implies
  that $\phi^{-1}$ is well-defined. Existence and uniqueness now
  implies that $\phi^{-1}$ is the inverse of $\phi$.
\end{proof}

\begin{lemma} \label{lem:phi-inv-cont}
  Let $C^0(I,O(n))$ be equipped with the supremum distance
  \begin{equation}\label{eq:sup-dist}
    d(g,h)_{\sup} = \sup_{s \in I}\; d(g(s),h(s))_{O(n)}.
  \end{equation}
  The map $\phi^{-1}$ is continuous into $C^0(I,O(n))$; moreover it
  maps bounded, weakly convergent sequences onto convergent sequences.
\end{lemma}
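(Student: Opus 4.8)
The plan is to work in the matrix model of $O(n)$, embedding it into the space of $n\times n$ matrices so that the reconstruction equation~\eqref{eq:reconstruction} becomes the linear matrix ODE $g'(s)=g(s)\xi(s)$, with integral form $g(s)=g_0+\int_0^s g(t)\xi(t)\d t$. Because $O(n)$ is compact, uniform Frobenius convergence of curves lying in a common connected component is equivalent to convergence in the sup-distance~\eqref{eq:sup-dist}; hence it suffices to prove both claims with $C^0(I,O(n))$ carrying the uniform Frobenius distance $\abs{g(s)-h(s)}$. Throughout I will use that left multiplication by an orthogonal matrix is a Frobenius isometry, $\abs{h\zeta}=\abs{\zeta}$ for $h\in O(n)$, together with submultiplicativity of the Frobenius norm and the embedding $\norm{\slot}_{L^1}\le\abs{I}^{1/2}\norm{\slot}_{L^2}$ valid on the bounded interval $I$.

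For the continuity statement I would compare two solutions $g,h$ with data $(g_0,\xi)$ and $(h_0,\eta)$. Writing $u(s)=\abs{g(s)-h(s)}$ and splitting the integrand as $g\xi-h\eta=(g-h)\xi+h(\xi-\eta)$, the isometry property gives the integral inequality $u(s)\le \abs{g_0-h_0}+\norm{\xi-\eta}_{L^1}+\int_0^s u(t)\abs{\xi(t)}\d t$. Gr\"onwall's inequality in integral form (valid for the merely $L^1$ coefficient $\abs{\xi(\slot)}$) then yields
\[
  \sup_{s\in I} u(s)\le \big(\abs{g_0-h_0}+\norm{\xi-\eta}_{L^1}\big)\exp\big(\norm{\xi}_{L^1}\big).
\]
Since $L^2$ convergence on $I$ implies $L^1$ convergence, this bound shows at once that $\phi^{-1}$ is (strongly) continuous into $C^0(I,O(n))$.

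The substantive claim is the behaviour on bounded weakly convergent sequences. Let $g_0^n\to g_0$ in $O(n)$ (automatically strong convergence, as $O(n)$ is finite dimensional) and $\xi_n\rightharpoonup\xi$ weakly in $L^2(I,\mathfrak{o}(n))$ with $\norm{\xi_n}_{L^2}\le M$, and set $g_n=\phi^{-1}(g_0^n,\xi_n)$. The first step is compactness in $C^0$: the $g_n$ take values in the compact set $O(n)$, and the isometry property combined with Cauchy--Schwarz gives the uniform estimate $\abs{g_n(s)-g_n(t)}\le M\abs{s-t}^{1/2}$, so the family is uniformly bounded and equicontinuous. By Arzel\`a--Ascoli, every subsequence of $(g_n)$ has a further subsequence converging uniformly to some $\bar g\in C^0(I,O(n))$.

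The second, and crucial, step is to identify $\bar g$ by passing to the limit in $g_n(s)=g_0^n+\int_0^s g_n(t)\xi_n(t)\d t$ along such a uniformly convergent subsequence. The obstacle is that the product $g_n\xi_n$ pairs a uniformly convergent factor with a merely weakly convergent one, so I would split it as $g_n\xi_n=(g_n-\bar g)\xi_n+\bar g\,\xi_n$. The first term is controlled by $\norm{g_n-\bar g}_{\sup}\norm{\xi_n}_{L^1}\le \abs{I}^{1/2}M\,\norm{g_n-\bar g}_{\sup}\to 0$, using uniform convergence of $g_n$ and the $L^2$-bound on $\xi_n$. For the second term, each matrix entry of $\int_0^s \bar g(t)\big(\xi_n(t)-\xi(t)\big)\d t$ is the $L^2$-pairing of $\xi_n-\xi$ against a fixed bounded (hence $L^2$) test function built from $\bar g$ and $\mathbf 1_{[0,s]}$, so it tends to $0$ by weak convergence. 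Passing to the limit gives $\bar g(s)=g_0+\int_0^s \bar g(t)\xi(t)\d t$, i.e.\ $\bar g$ solves the reconstruction equation with data $(g_0,\xi)$; by the existence and uniqueness of Proposition~\ref{prop:equiv-hilbert-mflds}, $\bar g=\phi^{-1}(g_0,\xi)$, independently of the subsequence. Hence the full sequence converges uniformly to $\phi^{-1}(g_0,\xi)$. I expect the main difficulty to be exactly this strong-times-weak product limit, for which the above splitting is the key device; everything else is Gr\"onwall and Arzel\`a--Ascoli.
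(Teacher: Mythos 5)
Your proof is correct, but it takes a genuinely different route from the paper's. The paper also embeds $O(n)$ into the matrix space $M_n$, but then extends the left-invariant vector fields $\xi(s)$ to compactly supported vector fields on an $O(n)$-invariant tubular neighbourhood so as to invoke a black-box result from the LDDMM literature (Theorem~\ref{thm:weak-conv-flow}, i.e.\ Theorems~8.7 and~8.11 of~\cite{Yo2010}) on uniform convergence of flows of bounded, weakly convergent time-dependent vector fields; the remaining work there is the bi-Lipschitz comparison~\eqref{eq:equiv-dist-norm} between the intrinsic distance on $O(n)$ and the Frobenius distance. You instead exploit the specific structure of the problem --- the linearity of $g'=g\xi$ in $g$ and the fact that left multiplication by an element of $O(n)$ is a Frobenius isometry --- to give a self-contained argument: Gr\"onwall for strong continuity, and Arzel\`a--Ascoli plus the splitting $g_n\xi_n=(g_n-\bar g)\xi_n+\bar g\,\xi_n$ (strong factor times weakly convergent factor) followed by uniqueness and the subsequence trick for the weak claim. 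Your route is more elementary, avoids the vector-field extension construction entirely, and yields an explicit quantitative continuity estimate; the paper's route is shorter on the page because the analytic core is delegated to the cited theorem, and it generalizes beyond the linear group-ODE setting. Two small remarks: the equivalence of the intrinsic and uniform Frobenius distances, which you assert (with the correct caveat about connected components), is exactly what the paper proves in~\eqref{eq:equiv-dist-norm}, so you are implicitly relying on the same fact; and your separate Gr\"onwall argument for plain continuity is logically redundant, since (as the paper notes) strong convergence of the data implies bounded weak convergence, so the second claim subsumes the first --- though the explicit estimate is a nice bonus.
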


We summarize Theorems~8.7 and~8.11 from~\cite{Yo2010} below, and use
this result as the basic ingredient for the proof of the lemma.
\begin{thm}\label{thm:weak-conv-flow}
  Let $v^* \in L^1(I,C_0^1(\Omega,\R^d))$ be a time-dependent vector
  field with support in the open, bounded set $\Omega \subset \R^d$,
  and let $v_i \in L^1(I,C_0^1(\Omega,\R^d))$ be a bounded sequence,
  weakly convergent to $v^*$. Then the respective flows $\phi^*,\phi_i$
  are diffeomorphisms at all times, and $\phi_i \to \phi^*$ in
  supremum norm on $\bar{\Omega}$.
\end{thm}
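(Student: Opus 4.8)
The plan is to prove the diffeomorphism property by standard Carath\'eodory flow theory, and then to obtain the convergence by a compactness argument in which the weak convergence of the vector fields is only ever invoked on the \emph{linear} evaluation along a single fixed limit trajectory. I would first record the uniform bounds following from boundedness of $\{v_i\}$ in $L^1(I,C_0^1)$, say $\int_I \norm{v_i(s)}_{C^1} \d s \le M$. Writing the flow in integral form $\phi_i(s,x) = x + \int_0^s v_i(\tau,\phi_i(\tau,x)) \d\tau$, Gronwall's inequality bounds the displacement $\abs{\phi_i(s,x)-x}\le M$ and the spatial Lipschitz constant $\mathrm{Lip}(\phi_i(s,\cdot))\le e^M$, uniformly in $i$ and $s$; the inverse flows (flows of $-v_i$ run backward) obey the same bounds. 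Since the fields are $C^1$ in space, the variational equation $\partial_s \D_x\phi_i = \D v_i(s,\phi_i)\,\D_x\phi_i$ with $\D_x\phi_i(0,\cdot)=\mathrm{Id}$ is a linear Carath\'eodory system with $L^1$-in-time coefficients, so $\D_x\phi_i$ exists, is continuous, and by Liouville's formula $\det \D_x\phi_i(s,x) = \exp\big(\int_0^s \tr \D v_i(\tau,\phi_i) \d\tau\big) > 0$. Together with the compact support of $v_i$ in $\Omega$ (so $\phi_i=\mathrm{Id}$ off $\Omega$), this shows each $\phi_i(s,\cdot)$, and likewise $\phi^*(s,\cdot)$, is a $C^1$-diffeomorphism at every time.

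For convergence I would first extract a uniform limit. The weak convergence $v_i \rightharpoonup v^*$ in $L^1$ makes the sequence uniformly integrable by the Dunford--Pettis theorem, so $\int_J \norm{v_i(\tau)}_{C^1} \d\tau$ is small uniformly in $i$ whenever $\abs{J}$ is small; since $\abs{\phi_i(s,x)-\phi_i(t,x)} \le \int_t^s \norm{v_i(\tau)}_{C^1} \d\tau$, this yields equicontinuity in $s$, while the Lipschitz bound gives equicontinuity in $x$. The maps $\phi_i\colon I\times\bar\Omega\to\R^d$ are thus uniformly bounded and equicontinuous, and Arzel\`a--Ascoli produces a subsequence converging uniformly to some $\psi$.

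The crux is to identify $\psi$ with $\phi^*$ by passing to the limit in the integral equation, and this is the step where the nonlinearity must be handled carefully, since weak convergence does not commute with the composition $v_i(\tau,\phi_i(\tau,\cdot))$. I would split
\[
  v_i(\tau,\phi_i(\tau,x)) = \big[v_i(\tau,\phi_i(\tau,x)) - v_i(\tau,\psi(\tau,x))\big] + v_i(\tau,\psi(\tau,x)).
\]
The bracketed term is controlled strongly: the spatial Lipschitz bound gives $\abs{\,\cdot\,}\le \norm{v_i(\tau)}_{C^1}\,\norm{\phi_i-\psi}_\infty$, whose time-integral is at most $M\,\norm{\phi_i-\psi}_\infty \to 0$ along the subsequence. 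The remaining term is linear in $v_i$ and evaluated along the \emph{fixed} trajectory $\psi$: for each fixed $s$ and $x$, the functional $v \mapsto \int_0^s v(\tau,\psi(\tau,x)) \d\tau$ is bounded on $L^1(I,C_0^1)$ (it is represented by an $L^\infty$-in-time, point-evaluation element of the dual of norm $\le 1$), so weak convergence gives $\int_0^s v_i(\tau,\psi(\tau,x)) \d\tau \to \int_0^s v^*(\tau,\psi(\tau,x)) \d\tau$. Passing to the limit yields $\psi(s,x) = x + \int_0^s v^*(\tau,\psi(\tau,x)) \d\tau$, so by uniqueness of the Carath\'eodory flow of $v^*$ we conclude $\psi=\phi^*$. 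As the limit is the same for every subsequence, the whole sequence converges, $\phi_i\to\phi^*$ in supremum norm on $\bar\Omega$ at each time.

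The main obstacle is precisely this limit identification: weak convergence alone cannot be pushed through the nonlinear evaluation, and the argument only closes because the decomposition isolates the weak convergence on the linear term along the already-fixed limit $\psi$, while the genuinely nonlinear discrepancy is absorbed by the strong uniform convergence $\phi_i\to\psi$ from Arzel\`a--Ascoli. The second delicate point is the use of Dunford--Pettis to convert the weak hypothesis into the uniform integrability needed for equicontinuity in time; without it the compactness step would fail.
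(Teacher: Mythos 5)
Your argument is correct, but note for context that the paper contains no proof of this statement: Theorem~\ref{thm:weak-conv-flow} is imported as a summary of Theorems~8.7 and~8.11 of \cite{Yo2010}, so the right comparison is with Younes's proof. The structural difference lies in how the limit flow is reached. Younes fixes the candidate limit $\phi^*$ from the start and estimates $\phi_i-\phi^*$ directly: he splits $v_i(\tau,\phi_i)-v^*(\tau,\phi^*)$ around the \emph{known} flow $\phi^*$, shows that the linear discrepancy $\eta_i(t,x)=\int_0^t \big(v_i-v^*\big)(\tau,\phi^*(\tau,x))\d\tau$ tends to zero uniformly (pointwise by weak convergence, upgraded by equicontinuity of the $\eta_i$), and closes with Gronwall's inequality --- no compactness extraction and no subsequence principle are needed. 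You instead extract a uniform limit $\psi$ of the flows by Arzel\`a--Ascoli and only afterwards identify $\psi=\phi^*$ by passing to the limit in the integral equation, splitting around $\psi$; the price is the sub-subsequence argument at the end, the gain is that you never need to guess the comparison flow in advance, and your Liouville-formula treatment of the diffeomorphism property is a clean rendering of what \cite{Yo2010} establishes in Theorem~8.7. Both proofs turn on the same key point, which you state explicitly: weak convergence is only ever tested against the bounded linear functional $v\mapsto\int_0^s v(\tau,\psi(\tau,x))\d\tau$ (bounded since $\abs[\big]{\int_0^s v(\tau,\psi(\tau,x))\d\tau}\le\norm{v}_{L^1(I,C^1_0)}$, so no dual-space representation is even required), while the genuinely nonlinear composition error is absorbed strongly via the uniform $C^1$ bound and Gronwall-type Lipschitz estimates.

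One ingredient deserves a sharper citation: your appeal to Dunford--Pettis takes place in the Bochner space $L^1(I,C_0^1(\Omega,\R^d))$, not in scalar $L^1$. The direction you need --- that a weakly convergent, hence relatively weakly compact, sequence in $L^1(I,X)$ has uniformly integrable norms $s\mapsto\norm{v_i(s)}_X$ --- is true (it is the necessity half of the weak-compactness characterization in $L^1(\mu,X)$, see e.g.\ Diestel--Uhl, \emph{Vector Measures}), but it is not the textbook scalar statement and should be flagged as such. It is also avoidable in the setting where the paper actually invokes the theorem: there the fields $\xi_i$ are bounded in $L^2(I,\mathfrak{o}(n))$, so $\int_{t'}^{t}\norm{\xi_i(\tau)}\d\tau\le\norm{\xi_i}_{L^2}\,\abs{t-t'}^{1/2}$ yields the time-equicontinuity elementarily, which is essentially why Younes can work in the Hilbert ($L^2$-in-time) setting.
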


\begin{proof}[Proof of Lemma~\ref{lem:phi-inv-cont}]
  Note that it is sufficient to prove the last, stronger claim. Let
  $\xi \in L^2(I,\mathfrak{o}(n))$ and $g_0 \in O(n)$, and let
  $g = \phi^{-1}(g_0,\xi)$. In order to apply
  Theorem~\ref{thm:weak-conv-flow}, we shall embed everything in
  linear spaces. Note that $O(n)$ has the usual isometric embedding
  into the space of matrices, $M_n \cong \R^{n \times n}$, and the
  Frobenius inner product corresponds to the normal Euclidean one. We
  interpret elements $\eta \in \mathfrak{o}(n)$ as left-invariant
  vector fields on $O(n)$. These can be smoothly extended to have
  compact support on an $O(n)$-invariant tubular neighborhood\footnote{%
    See~\cite[Lem.~2.30]{El2013} for the construction of an $O(n)$-invariant
    tubular neighborhood. This allows one to lift $\eta$ to
    a horizontal vector field on this neighborhood, multiplied by a
    smooth, $O(n)$-invariant cut-off function with compact support.
    This extended vector field is integrable when $\eta$ is.%
  } of $O(n) \subset M_n$. Thus,
  $\xi \in L^2(I,\mathfrak{o}(n))$ defines a time-dependent vector
  field on $M_n$ with compact support, that is smooth with respect to
  $g \in M_n$ and square-integrable with respect to $s \in I$.
  Since $I$ is bounded, this implies absolute integrability,
  satisfying the conditions of Theorem~\ref{thm:weak-conv-flow}.
  The extended vector field leaves $O(n)$ invariant and is identical
  to the original $\eta$ on $O(n)$, so the generated flow can be
  restricted to a flow on $O(n)$, which is the one generated by
  $\eta$.

  Next, we claim that there exists a $C \ge 1$ such that
  \begin{equation}\label{eq:equiv-dist-norm}
    C^{-1} d(g,g')_{O(n)} \le \norm{g - g'}_{M_n} \le C d(g,g')_{O(n)}.
  \end{equation}
  Note that the second inequality follows straightforwardly (with
  $C = 1$) from the isometric embedding $O(n) \subset M_n$. We sketch
  the argument for the first inequality, more details can be found
  in~\cite[Lem.~2.25]{El2013}. Using symmetry, we need only prove the
  case where $g' = e$. For a sufficiently small $\delta > 0$ and
  $d(e,g)_{O(n)} < \delta$, we can consider $B(e;\delta) \subset O(n)$
  and view it as the graph of the function
  $\exp\colon \mathfrak{o}(n) \to M_n$ which has identity derivative
  at $e$, thus we can bound $\norm{\D\exp - I} \le \frac{1}{2}$, say,
  on $B(e;\delta)$. Now we write
  \begin{equation*}
    g - e = \int_0^1 \D\exp(t\log(g))\cdot\log(g) \d t,
  \end{equation*}
  from which the estimate
  \begin{equation*}
    \norm{g - e}_{M_n} \ge \int_0^1 \frac{1}{2}\norm{\log(g)} \d t
    = \frac{1}{2} d(e,g)_{O(n)},
  \end{equation*}
  follows. For $d(e,g)_{O(n)} > \delta$ we can simply choose
  $C \ge \text{diam}_{M_n}(O(n)) / \delta$.

  Finally, we note that $L^2(I,M_n)$ is continuously embedded into
  $L^1(I,M_n)$ and apply Theorem~\ref{thm:weak-conv-flow} to conclude
  that $\phi^{-1}$ maps bounded, weakly convergent sequences into
  convergent sequences in $C^0(I,O(n)_{M_n})$, where $O(n)_{M_n}$
  denotes $O(n)$ with the distance induced by the embedding into
  $M_n$. By~\eqref{eq:equiv-dist-norm} this distance is equivalent to
  the intrinsic distance on $O(n)$, hence the result follows.
\end{proof}

\paragraph{The registration functional.}

We will prove the existence of a minimizer for a slightly more general
functional, given by
\begin{equation}\label{eq:energy_generalized}
  E(g) = V(g) + T(g) = \int_I \Big( U(s, g(s)) + \frac{1}{2}\norm{\xi(s)}^2 \Big) \d s,
\end{equation}
where $\xi(s) = g(s)^{-1} g'(s) \in \mathfrak{o}(n)$ and $V$ and $T$
denote the potential and kinetic parts of the functional,
respectively. The function $U\colon I \times O(n) \to \R$ is assumed
to be continuous, and the derivative with respect to the second
variable, $\D_2 U$, also continuous.


We take a minimizing sequence $g_i \in H^1(I,O(n))$, that is,
\begin{equation*}
  \lim_{i \to \infty} E(g_i) = \inf_{g \in H^1(I,O(n))} E(g).
\end{equation*}
Since $\lim_{\norm{g}_{H^1} \to \infty} E(g) = \infty$, it follows
that this sequence is bounded.
Using $\phi$ from Proposition~\ref{prop:equiv-hilbert-mflds} we can
identify this sequence with a sequence
$(g_0,\xi)_i \in O(n) \times L^2(I,\mathfrak{o}(n))$.

In the following we take subsequences without denoting these with new
indices. First, since $O(n)$ is compact, we can take a subsequence
such that $(g_0)_i$ converges to an element $g^*_0 \in O(n)$.
Secondly, since the sequence $\xi_i$ is bounded, we extract a
subsequence such that $\xi_i$ converges weakly
to some $\xi^* \in L^2(I,\mathfrak{o}(n))$. We show that
$g^* = \phi^{-1}(g^*_0,\xi^*)$ is a minimizer of $E$.

We have automatically that
\begin{equation}
  \inf_{g \in H^1(I,O(n))} E(g) \le E(g^*),
\end{equation}
and we now prove the reverse inequality. The only place in which our
proof differs from the sketched proof of~\cite[Thm.~21]{BrHo2013}
is in our treatment of the
terms involving $U$. Let $g_i$ and $g^*$ be the solutions of the
reconstruction equation~\eqref{eq:reconstruction} associated with
$(g_0,\xi)_i$ and $(g^*_0,\xi^*)$, respectively. By
Lemma~\ref{lem:phi-inv-cont} we have that $g_i \to g^*$ under
the supremum norm. The function $U$ is bounded since it is continuous
on a compact domain; by the bounded convergence theorem we have
therefore that
\[
	\lim_{i \to \infty} \int_I U(s, g_i(s)) \d s = \int_I U(s, g^*(s)) \d s,
\]
that is, the functional defined by $U$ is continuous.
The rest of the proof now proceeds as in~\cite{BrHo2013}: from the inequality
\[
  \inner{\xi_i}{\xi^*}_{L^2} \le
  \norm{\xi_i}_{L^2} \norm{\xi^*}_{L^2},
\]
we have that $\norm{\xi^*}_{L^2} \le \liminf_{i \to \infty} \norm{\xi_i}_{L^2}$,
and therefore
\begin{align*}
  E(g^*)
  &=   \frac{1}{2} \norm{\xi^*}_{L^2}^2 +
       \int_I U(s, g^*(s)) \d s \\
  &\le \liminf_{i \to \infty} \frac{1}{2} \norm{\xi_i}_{L^2}^2 +
          \lim_{i \to \infty} \int_I U(s, g_i(s)) \d s \\
	&=      \lim_{i \to \infty} E(g_i)
	 = \inf_{g \in H^1(I,O(n))} E(g).
\end{align*}
This concludes the proof that the energy functional has a minimizer.

Finally, we can conclude that any minimizer of the energy functional
is a solution of the variational equations~\eqref{eq:omega_equations}
as follows. First, by Proposition~\ref{prop:E-differentiable} below
$E$ is differentiable, hence at a minimizer $g \in H^1(I,O(n))$ we
must have $\D E(g) = 0$; this is equivalent
to~\eqref{eq:variation_E}. The variational equations derived from this
can be interpreted in a distributional sense, acting on variations
$\var g \in C^\infty(I,O(n))$. However, the
equations~\eqref{eq:omega_equations} define a continuous vector field
on $\T O(n)$ coming from a second order ODE, so solutions of it must
be curves $g \in C^2(I,O(n))$, see for instance~\cite[p.~178]{Du1976}
or~\cite[Thm.~4.1]{Buttazzo1998:1d-var-problems}.
This completes the proof of Theorem~\ref{thm:exist-minimizer}.

\begin{prop}\label{prop:E-differentiable}
  The functional $E\colon H^1(I,O(n)) \to \R$ given
  by~\eqref{eq:energy_generalized} is differentiable.
\end{prop}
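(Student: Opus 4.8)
The plan is to prove that $E = T + V$, with kinetic term $T(g) = \frac{1}{2}\int_I \norm{g^{-1}g'}^2 \d s$ and potential term $V(g) = \int_I U(s,g(s))\d s$, is Fr\'echet differentiable (and indeed $C^1$) by treating the two terms separately in the canonical exponential charts of the Hilbert manifold $H^1(I,O(n))$. Throughout, the decisive analytic tool is the one-dimensional Sobolev embedding $H^1(I,\cdot) \hookrightarrow C^0(I,\cdot)$, which is continuous and in particular bounds a section in the supremum norm by its $H^1$ norm, $\norm{v}_{C^0}\le C\norm{v}_{H^1}$; this is what lets us control the pointwise products that appear. So I would fix $g \in H^1(I,O(n))$ and work in the chart $v \mapsto \exp_g(v)$ with $v \in \T_g H^1(I,O(n)) = H^1(g^*\T O(n))$; under the left trivialization $v(s) = g(s)\sigma(s)$ this tangent space is isomorphic to $H^1(I,\mathfrak{o}(n))$ via $\sigma = g^{-1}v$.

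For the potential term I would differentiate under the integral sign. Pointwise, $\frac{d}{d\epsilon}\big|_0 U(s, \exp_{g(s)}(\epsilon v(s))) = \inner{\D_2 U(s,g(s))}{v(s)}$, so the candidate derivative $\D V(g)\cdot v = \int_I \inner{\D_2 U(s,g(s))}{v(s)}\d s$ is a bounded linear functional (even on $L^1$, a fortiori on $H^1$). Since $I \times O(n)$ is compact and $\D_2 U$ is continuous by hypothesis, it is uniformly continuous, and a first-order Taylor estimate for $\exp$ together with $\norm{v}_{C^0} \le C\norm{v}_{H^1}$ shows the remainder is $o(\norm{v}_{H^1})$, giving Fr\'echet differentiability of $V$. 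The same uniform continuity shows that $g \mapsto \D V(g)$ is continuous, so $V \in C^1$.

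For the kinetic term the cleanest route is to prove that the map $\Phi\colon H^1(I,O(n)) \to L^2(I,\mathfrak{o}(n))$, $\Phi(g) = g^{-1}g'$, is $C^1$; then $T = \frac{1}{2}\norm{\slot}_{L^2}^2 \circ \Phi$ is the composition of a smooth quadratic form with a $C^1$ map, hence $C^1$. (Equivalently, one may invoke the global trivialization $\phi$ of Proposition~\ref{prop:equiv-hilbert-mflds}, for which $\Phi$ is the second component and in whose coordinates $T = \frac{1}{2}\norm{\xi}_{L^2}^2$ is manifestly smooth.) In the chart at $g$, the derivative of $\Phi$ is the linear map $v \mapsto \sigma' - [\sigma,\Omega]$ with $\sigma = g^{-1}v$ and $\Omega = g^{-1}g'$ --- exactly the expression $\var\Omega$ computed in Section~\ref{sec:variational-eqns} --- which is bounded from $H^1(I,\mathfrak{o}(n))$ into $L^2(I,\mathfrak{o}(n))$ because $\sigma' \in L^2$ and $\norm{[\sigma,\Omega]}_{L^2} \le 2\norm{\sigma}_{C^0}\norm{\Omega}_{L^2}$. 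Recognising $\D E(g)\cdot v$ as the first variation computed earlier (for the registration functional this is precisely~\eqref{eq:variation_E}) then identifies the derivative with the Euler--Poincar\'e expression of Section~\ref{sec:variational-eqns}.

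The main obstacle is the remainder estimate for $\Phi$. Expanding $g_v(s) = \exp_{g(s)}(v(s))$ and its $s$-derivative to second order, the quadratic remainder in $\Phi(g_v) - \Phi(g) - \D\Phi(g)\cdot v$ consists of terms that are pointwise bilinear in $(v,v')$, schematically of the form $v \cdot v$ and $v \cdot v'$. These are exactly the products controlled by the embedding: $\norm{v \cdot v'}_{L^2} \le \norm{v}_{C^0}\norm{v'}_{L^2} \le C\norm{v}_{H^1}^2$ and likewise $\norm{v\cdot v}_{L^2}\le C\norm{v}_{H^1}^2$, so the remainder is $O(\norm{v}_{H^1}^2) = o(\norm{v}_{H^1})$ and $\Phi$ is Fr\'echet differentiable at $g$. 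Checking that $g\mapsto\D\Phi(g)$ is continuous upgrades this to $\Phi \in C^1$, and hence $E \in C^1$; this continuity rests on the elementary fact that $g\mapsto\Omega=g^{-1}g'$ is continuous from $H^1(I,O(n))$ into $L^2(I,\mathfrak{o}(n))$, since $g_i\to g$ in $H^1$ forces $g_i\to g$ in $C^0$ and $g_i'\to g'$ in $L^2$, so that $g_i^{-1}g_i'\to g^{-1}g'$ in $L^2$. The one point requiring genuine care is the bookkeeping of the Christoffel terms hidden in the second-order expansion of $\exp_{g(s)}(v(s))$ and of $\frac{d}{ds}\exp_{g(s)}(v(s))$; but since $O(n)$ is a fixed compact manifold these coefficients are smooth and uniformly bounded, so they do not affect the orders of the estimates.
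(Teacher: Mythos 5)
Your proof is correct, and for the potential term it coincides with the paper's argument: both differentiate under the integral, take $\D V(g)\var g = \int_I \D_2 U(s,g(s))\var g(s)\d s$ as the candidate derivative, and control the remainder by the uniform continuity of $\D_2 U$ on the compact domain $I\times O(n)$ together with the Sobolev bound $\norm{\var g}_{C^0}\le C\norm{\var g}_{H^1}$ (your use of exponential charts is a slightly cleaner way of phrasing the paper's informal ``$g+\var g$''). Where you genuinely diverge is the kinetic term: the paper disposes of it in one line by citing Klingenberg's theorem that the Riemannian energy functional is differentiable on $H^1(I,M)$ (which applies here because the bi-invariance of the metric makes $\norm{g^{-1}g'}_{\mathfrak{o}(n)}=\norm{g'}_g$), whereas you prove directly that $\Phi(g)=g^{-1}g'$ is $C^1$ into $L^2(I,\mathfrak{o}(n))$ with derivative $\sigma'-[\sigma,\Omega]$ and compose with the smooth $L^2$ quadratic form, or alternatively read off smoothness from the global trivialization of Proposition~\ref{prop:equiv-hilbert-mflds}. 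Your route is more self-contained and has the added benefit of exhibiting $\D T(g)$ explicitly in the Euler--Poincar\'e form used in Section~\ref{sec:variational-eqns} (and of yielding $C^1$ rather than mere differentiability); the paper's citation is shorter but leaves the link between Klingenberg's energy functional and the left-trivialized expression implicit. The only place where your argument is a sketch rather than a complete proof is the second-order remainder for $\Phi$; your schematic bound $\norm{v\cdot v'}_{L^2}\le\norm{v}_{C^0}\norm{v'}_{L^2}\le C\norm{v}_{H^1}^2$ is indeed the right mechanism (writing $g_v=g\exp(\sigma)$ one gets $\Phi(g_v)=\Ad_{\exp(-\sigma)}\Omega+\exp(-\sigma)(\exp\sigma)'$, whose expansions beyond first order are pointwise dominated by $\norm{\sigma}^2\norm{\Omega}$ and $\norm{\sigma}\,\norm{\sigma'}$ respectively), so the gap is one of bookkeeping, not of substance.
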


\begin{proof}
  Differentiability of the kinetic term $T(g)$ follows
  from~\cite[Thm.~2.3.20]{Kl1995}. For the potential term $V(g)$, we
  claim that the derivative is given by
  \begin{equation}\label{eq:V-derivative}
    \D V(g) \var g = \int_I \D_2 U(s,g(s)) \var g(s) \d s.
  \end{equation}
  Since $\D_2 U$ is continuous on a compact domain, it is uniformly
  continuous; let $\epsilon$ denote its modulus of continuity. We
  directly estimate
  \begin{align*}
    &\hspace{-1cm}\abs[\big]{V(g+\var g) - V(g) - \D V(g) \var g}\\
    &=   \int_I \abs[\big]{U(s,g(s)+\var g(s)) - U(s,g(s)) - \D_2 U(s,g(s)) \var g(s)} \d s\\
    &\le \int_I \int_0^1 \abs[\big]{\D_2 U(s,g(s)+\tau \var g(s)) \var g(s)
                                  - \D_2 U(s,g(s))                \var g(s)} \d\tau \d s\\
    &\le \epsilon\big(\sup_{s \in I} \norm{\var g(s)}\big) \int_I \norm{\var g(s)} \d s
     \in o(\norm{\var g}_{H^1}),
  \end{align*}
  which proves our claim.
\end{proof}

\section{Probabilistic approach to curve matching}
\label{sec:probability}

The energy functional~\eqref{eq:energy_functional} can be derived from
a Bayesian point of view as well. To see this, we adapt the Bayesian
approach to linear regression (see for instance~\cite{Bi2006}) to the
case of curve matching. We first make the following simplifications:
\begin{enumerate}
\item We assume that the curves are planar and we let the order of the
  jets be $k = 1$, so that only the tangent vector field of the curves
  is taken into account.
\item We only consider the tangent vector field at the end points of
  $N$ regularly spaced intervals in the parameter $s$. In other words,
  we sample $c_1'(s)$ and $c_2'(s)$ at $s_n = n \Delta s$ for $n = 0,
  \ldots, N$, where $\Delta s = 1 / N$ and $n = 0, \ldots N$.  Later
  on, we will let $N$ approach infinity.
\end{enumerate}

Throughout the remainder of this paragraph, we will use
$\{c_1'(s_n)\}$ as a shorthand for the ensemble $\{c_1'(s_0), \ldots,
c_1'(s_N)\}$, and similarly for $\{c_2'(s_n)\}$.

Now let $c_1(s_n)$ be fixed, and assume that, for each $n = 0, \ldots, N$,
$c_2'(s_n)$ is found by acting on $c_1'(s_n)$ with a rotation matrix $g_n$,
and by adding noise:
\[
   c_2'(s_n) = g_n \cdot c_1'(s_n) + \epsilon_n,
\]
where the $\epsilon_n$ are $\mathbb{R}^2$-valued, independently
distributed Gaussian random variables with mean $0$ and variance
$\sigma^2 / \lambda I$, where $I$ is the $2 \times 2$ identity
matrix. The constant $\lambda$ will play the same role as the scaling
parameters in the norm~\eqref{eq:jet_inner_product}.

The conditional probability to obtain the ensemble $\{c_2'(s_n)\}$ for
$n = 0, \ldots, N$, given $\{c_1'(s_n)\}$ and $\{g_n\}$ is
\[
  p(\{c_2'(s_n)\} \, | \, \{c_1'(s_n)\}, \{g_n\}) \propto
    \exp\left( - \frac{\lambda}{\sigma^2} \Delta s
      \sum_{n = 0}^N \norm{c_2'(s_n)  - g_n \cdot c_1'(s_n)}^2
\right).
\]
Now assume that we choose a prior on the space of discrete curves
$\{g_n\}$ which privileges curves which are nearly constant. For instance,
we may choose
\[
  p(\{g_n\}) \propto \exp \left( - \frac{1}{\Delta s}
    \sum_{n = 0}^{N-1} \norm{g_{n + 1} - g_n}^2 \right),
\]
where $\norm{\,\cdot\,}$ is the Frobenius norm on the
space of matrices. By Bayes' theorem, we can then calculate the
conditional probability for $\{g_n\}$ given $\{c_1'(s_n)\}$ and $\{c_2'(s_n)\}$ as
\[
  p(\{g_n\} \, | \, \{c_1'(s_n)\}, \{c_2'(s_n)\}) \propto
      p(\{c_2'(s_n)\} \, | \, \{c_1'(s_n)\}, \{g_n\}) p(\{g_n\}),
\]
where we have used the fact that $g_n$ is independent from
$c_1'(s_n)$, so that $p(\{g_n\} | \{c_1'(s_n)\}) = p(\{g_n\})$. The negative
logarithm of this density is given by
\begin{multline} \label{logpdf}
- \log p(\{g_n\} \, | \, \{c_1'(s_n)\}, \{c_2'(s_n)\}) = \\
 \frac{\lambda}{\sigma^2} \Delta s
      \sum_{n = 0}^N \norm{c_2'(s_n)  - g_n \cdot c_1'(s_n)}^2
+ \frac{1}{\Delta s}
    \sum_{n = 0}^{N-1} \norm{g_{n + 1} - g_n}^2,
\end{multline}
and for $N \to \infty$, this gives precisely the matching
energy~\eqref{eq:energy_functional}. From this point of the view, the
curve $g(s)$ that minimizes~\eqref{eq:energy_functional} is precisely
the maximum posterior estimate of (the continuum version of) the
distribution~\eqref{logpdf}.

\section{Conclusions and outlook}

In this paper, we have defined a notion of distance between
parametrized curves in $\R^n$, and we have shown (among other
things) that --- in contrast to previous approaches --- this notion is
a true distance function. Our approach uses only standard geometrical
notions, and is hence eminently generalizable. Below, we describe some
directions for future research.

\paragraph{Statistical analysis of curves and shapes.} Using the
distance function on the space of curves defined in this paper (and
its generalization to surfaces, described below), we can embark on a
statistical analysis of shapes and curves. Having a notion of distance
will allow us to register curves, compute (Fr\'echet) means, and use
tools from Riemannian geometry in the exploration of shape geometries;
see~\cite{LePennec2006} for more details.

\paragraph{Matching of surfaces.} The method presented in this paper
naturally generalizes to matching of parametrized (2D) surfaces in
$\R^n$. We sketch here the setup; for simplicity of presentation we
set $k = 1$ and take as parametrization domain the torus
$\mathbb{T}^2 = [0,1]^2 / \sim$, where $\sim$ identifies opposite
edges of the unit square. Then the energy functional
becomes\footnote{%
  We have to add the condition that $g \in L^\infty$, since this is
  not implied anymore by Sobolev inequalities if $g \in H^1$ on a
  two-dimensional domain.%
}
\begin{equation}\label{eq:energy_surfaces}
  E[g;f_1,f_2] = \frac{1}{2} \int_{[0,1]^2}
                   \norm{g(s,t)\cdot\D f_1(s,t) - \D f_2(s,t)}^2
                  +\norm{g(s,t)^{-1} \D g(s,t)}^2  \d s \d t,
\end{equation}
with $g\colon \mathbb{T}^2 \to O(n)$ and
$f_i\colon \mathbb{T}^2 \to \R^n$, $\D$ denotes a derivative with
respect to both variables $s,t$ and the norms are now defined on pairs
of elements in $V$ and $\mathfrak{o}(n)$ respectively, by a
square-root of the sum of squares. Finally, $g(s,t)$ simply acts on
each element of the pairs. The variation of the functional, $\var E$,
now contains a derivative both with respect to $s$ and $t$.
Integrating each by parts, this leads to the variational equations
\begin{equation}\label{eq:var_eq_surfaces}
  \D_s \Omega_s + \D_t \Omega_t
  = (g^{-1} \cdot \D f_2) \diamond (\D f_1)^\flat,
\end{equation}
where everything depends on $(s,t)$ and the subscript $s,t$ denote
derivatives and components with respect to these. This equation
naturally generalizes~\eqref{eq:omega_equations}, although it is now a
second order partial differential equation. To decompose this into
first order PDEs, one has to add the equation
\begin{equation}\label{eq:curvature_relations}
  \D_s \Omega_t - \D_t \Omega_s = \big[ \Omega_t \,,\, \Omega_s \big]
\end{equation}
that expresses symmetry of the second order derivatives of the underlying
$g(s,t)$. This formulation is closely related to that of a $G$-strand
and~\eqref{eq:curvature_relations} is known as the
\emph{zero curvature relation}, see~\cite{HoIvPe2012}.

\paragraph{Generalizations to curved manifolds.} From a geometric
point of view, it would be interesting to consider the extension of
this framework to the case of curves taking values in an arbitrary
Riemannian manifold $M$. One immediate difficulty is that the matching
term in~\eqref{eq:energy_functional} involves the difference of jets
at different points. To remedy this, one could either choose a flat
background connection, if possible, and parallel translate the jets to
a common base point. Another possibility would be to replace the group
$O(n)$ by the \emph{orthogonal groupoid} $O(\T M, \T M)$, consisting
of linear bundle isometries from $\T M$ to $\T M$. If we view the
combined source and target maps
$(\alpha,\beta)\colon O(\T M, \T M) \to M \times M$ as a projection
defining a fiber bundle, then $g \in H^1(I,O(n))$ is generalized to
being a section
\begin{equation*}
  g \in H^1\big((c_1,c_2)^* O(\T M, \T M)\big),
\end{equation*}
that is, we have $g(s) \in O(\T_{c_1(s)} M,\T_{c_2(s)} M)$. The action
of $g(s)$ on a jet $j^k_s c_1$ can be defined by its action on the
covariant derivatives of $c_1$. Parallel transported orthonormal
frames along the curves $c_i$ will allow representing $g'(s)$ in
$\mathfrak{o}(n)$.

\section*{Acknowledgements}

We would like to thank Martin Bauer, Martins Bruveris, Darryl Holm and
Lyle Noakes for valuable comments and discussions. Both authors are
supported by the ERC Advanced Grant 267382.
JV is also grateful for partial support by the Irses project GEOMECH
(nr.~246981) within the 7th European Community Framework Programme, and
is on leave from a Postdoctoral Fellowship of the Research
Foundation-Flanders (FWO-Vlaanderen).

\printbibliography

\end{document}